\numberwithin{equation}{section}
\newtheorem{dfn}{Definition}[section]
\newtheorem{propo}[dfn]{Proposition}
\newtheorem{theo}[dfn]{Theorem}
\newtheorem{cor}[dfn]{Corollary}
\newtheorem{lem}[dfn]{Lemma}
\theoremstyle{definition}
\newtheorem{nota}[dfn]{Notation}
\theoremstyle{remark}
\newtheorem{rem}[dfn]{Remark}
\newcommand{\norm}[1]{\left\lVert #1 \right\rVert}
\newcommand{\ens}[1]{\left\{ #1\right\}}
\newcommand{\R}{\mathbb{R}}
\newcommand{\Z}{\mathbb{Z}}
\newcommand{\abs}[1]{\left|#1\right|}
\newcommand{\eq}[1]{\begin{equation}#1\end{equation}}
\providecommand{\newoperator}[3]{%
  \newcommand*{#1}{\mathop{#2}#3}}
\newoperator{\re}{\mathrm{Re}}{\,}
\newoperator{\im}{\mathrm{Im}}{\,}
\begin{document}

\title{Holderian weak invariance principle under a Hannan type condition}

\author{Davide Giraudo}

\address{Universit\'e de Rouen, LMRS, Avenue de l'Universit\'e, BP 12 76801 
Saint-\'Etienne-du-Rouvray cedex, France.}

\email{davide.giraudo1@univ-rouen.fr}

\date{\today}

\keywords{Invariance principle, martingales, strictly stationary process}

\subjclass[2010]{60F05; 60F17}

 \begin{abstract}
  We investigate the invariance principle in H\"older spaces for strictly stationary 
  martingale difference sequences. In particular, we show that the sufficient condition on 
  the tail in the i.i.d. case does not extend to stationary ergodic martingale 
  differences. We provide a sufficient condition on the conditional variance 
  which guarantee the invariance principle in H\"older spaces. We then 
  deduce a condition in the spirit of Hannan one.
 \end{abstract}
 
 \maketitle

 \section{Introduction}
 
 One of the main problems in probability theory is the understanding of the 
 asymptotic behavior of Birkhoff sums $S_n(f):=\sum_{j=0}^{n-1}f\circ T^i$, where $(\Omega,
 \mathcal F,\mu,T)$ 
 is a dynamical system and $f$ a map from $\Omega$ to the real line. 
 
 One can consider random functions contructed from the Birkhoff sums
 \begin{equation}\label{polyg_f}
   S_n^{\mathrm{pl}}(f,t):=S_{[nt]}(f)+ (nt-[nt])f\circ T^{[nt]+1},\quad t\in [0,1].
   \end{equation}
and investigate the asymptotic behaviour of the sequence $\left(S_n^{\mathrm{pl}}(f,t)
\right)_{n\geqslant 1}$ seen as an element of a function space. 
 Donsker showed (cf. \cite{MR0040613}) that the sequence
  $(n^{-1/2}(\mathbb E(f^2))^{-1/2}S_n^{\mathrm{pl}}(f))_{n\geqslant 1}$ 
  converges in distribution in the 
  space of continuous functions on the unit interval to a standard Brownian 
  motion $W$ when the sequence $(f\circ T^i)_{i\geqslant 0}$ is i.i.d. and 
  zero mean. Then an intensive research has then been performed to extend this result to 
  stationary weakly dependent  sequences. 
  We refer the reader to \cite{MR2206313} for the main theorems in this direction.
  
  Our purpose is to investigate the weak convergence of the sequence 
  $(n^{-1/2}S_n^{\mathrm{pl}}(f))_{n\geqslant 1}$  in 
  H\"older spaces when $(f\circ T^i)_{i\geqslant 0}$ is a strictly stationary sequence. 
  A classical method for showing a limit theorem is to use a martingale approximation, which 
  allows to deduce the corresponding result if it holds for martingale differences sequences 
  provided that the approximation is good enough. To the best of our knowledge, no 
  result about the invariance principle in H\"older space for stationary martingale difference 
  sequences is known. 
  
 \subsection{The H\"older spaces}
 
  It is well known that standard Brownian motion's paths are almost surely 
  H\"older regular of exponent 
  $\alpha$ for each $\alpha\in (0,1/2)$, hence it is natural to consider the 
  random function defined in 
  \eqref{polyg_f} as an element of $\mathcal H_\alpha[0,1]$ and 
  try to establish its weak 
  convergence to a standard Brownian motion in this function space. 
  
  Before stating the results in this direction, let us 
  define for $\alpha\in (0,1)$ the H\"older space $\mathcal H_\alpha[0,1]$ of functions 
  $x\colon[0,1]\to\R$
  such that $\sup_{s\neq t}\abs{x(s)-x(t)}/\abs{s-t}^\alpha$ is finite. 
  The analogue of the continuity modulus in $C[0,1]$ is $w_\alpha$, defined by 
  \begin{equation}
  w_\alpha(x,\delta)=\sup_{0<\abs{t-s}<\delta}\frac{\abs{x(t)-x(s)}}{\abs{t-
  s}^\alpha}.
  \end{equation}
  We then define $\mathcal H_\alpha^0[0,1]$ by $\mathcal H_\alpha^0[0,1]:=
  \ens{x\in \mathcal H_\alpha[0,1],
  \lim_{\delta\to 
  0}w_\alpha(x,\delta)=0}$. We shall essentially work with the space 
  $\mathcal H_\alpha^0[0,1]$ which, 
  endowed with $\norm{x}_\alpha:=w_\alpha(x,1)+\abs{x(0)}$, is a separable Banach space 
  (while $\mathcal H_\alpha[0,1]$ is not separable). Since the canonical 
  embedding $\iota\colon \mathcal H^o_\alpha[0,1]\to \mathcal H_\alpha[0,1]$
  is continuous, each 
  convergence in distribution in $\mathcal H_\alpha^o[0,1]$ also takes place in 
  $\mathcal H_\alpha[0,1]$. 
  
  Let us denote by $D_j$ the set of dyadic numbers in $[0,1]$ of level $j$, that 
  is, 
  \begin{equation}
  D_0:=\ens{0,1},\quad D_j:=\ens{(2l-1)2^{-j};1\leqslant l\leqslant 2^{j-1}}, 
  j\geqslant 1.
  \end{equation}
If $r\in D_j$ for some $j\geqslant 0$, we define $r^+:=r+2^{-j}$ and 
$r^-:=r-2^{-j}$. 
For $r\in D_j$, $j\geqslant 1$, let $\Lambda_r$ be the function whose graph 
is the polygonal path joining the points $(0,0)$, $(r^-,0)$, $(r,1)$, 
$(r^+,0)$ and $(1,0)$. We can 
decompose each $x\in C[0,1]$ as 
\begin{equation}
x=\sum_{r\in D}\lambda_r(x)\Lambda_r=\sum_{j=0}^{+\infty}\sum_{r\in D_j}
\lambda_r(x)\Lambda_r,
\end{equation}
and the convergence is uniform  on $[0,1]$. The coefficients $\lambda_r(x)$ are given 
by 
\begin{equation}\label{eq:def_lambda_r}
\lambda_r(x)=x(r)-\frac{x(r^+)+x(r^-)}2, \quad r\in D_j,j\geqslant 1,
\end{equation}
and $\lambda_0(x)=x(0)$, $\lambda_1(x)=x(1)$. 

Ciesielski proved (cf. \cite{MR0132389}) that $\ens{\Lambda_r;r\in D}$ is a 
Schauder basis of 
$\mathcal H_\alpha^o[0,1]$ and the norms $\norm{\cdot}_\alpha$ and the sequential norm 
defined by
\begin{equation}
\norm{x}_\alpha^{\operatorname{seq}}:=\sup_{j\geqslant 0}2^{j\alpha}\max_{r\in 
D_j}\abs{\lambda_r(x)},
\end{equation}
are equivalent. 

Considering the sequential norm, we can show (see Theorem~3 in \cite{MR1736910}) that a sequence 
$(\xi_n)_{n\geqslant 1}$ of random elements of $\mathcal H_\alpha^o$ 
vanishing at $0$ is tight if and only if for each positive 
$\varepsilon$, 
\begin{equation}
\lim_{J\to\infty}\limsup_{n\to \infty}\mu\ens{\sup_{j\geqslant J}2^{j\alpha}
\max_{r\in D_j}\abs{\lambda_r(\xi_n)}>\varepsilon}=0.
\end{equation}

\begin{nota}\label{first_notation}
In the sequel, we will denote $r_{k,j}:=k2^{-j}$ and $u_{k,j}:= [nr_{k,j}]$ 
(or $r_k$ and $u_k$ for 
short). Notice that $u_{k+1,j}-u_{k,j}=
[nr_{k,j}+n2^{-j}]-u_{k,j}\leqslant 2n2^{-j}$ if $j\leqslant \log n$, where 
$\log n$ denotes 
the binary logarithm of $n$ and for a real number $x$, $[x]$ is the unique 
integer for which $[x]\leqslant x< [x]+1$. 
\end{nota}

\begin{rem}\label{rmq:simplification_sequential_norm}
Since for each $x\in\mathcal H_{1/2-1/p}[0,1]$, each $j\geqslant1$ 
and each $r\in D_j$, 
\begin{multline}
 \abs{\lambda_r(x)}\leqslant \frac{\abs{x(r^+)-x(r)}}2+
 \frac{\abs{x(r)-x(r^-)}}2\leqslant \\
 \leqslant\max\ens{\abs{x(r^+)-x(r)}, 
 \abs{x(r)-x(r^-)}},
\end{multline}
for a function $f$, the sequential norm of $n^{-1/2}S_n^{\mathrm{pl}}(f)$ does not exceed 
\begin{equation}
 \sup_{j\geqslant 1} 2^{\alpha j}n^{-1/2}
  \max_{0\leqslant k<2^j}\abs{S_n^{\mathrm{pl}}(f,r_{k+1,j})-S_n^{\mathrm{pl}}(f,r_{k,j})}.
\end{equation}

\end{rem}

Now, we state the result obtained by Ra\v{c}kauskas and Suquet in \cite{MR2000642}.

\begin{theo}\label{thm_R_S}
 Let $p>2$ and let $(f\circ T^j)_{j\geqslant 0}$ be an i.i.d. centered sequence with 
 unit variance. Then 
 the condition 
 \begin{equation}\label{condition_suff_iid}
  \lim_{t\to \infty}t^p\mu\ens{\abs f>t}=0
 \end{equation}
is equivalent to the weak convergence of 
the sequence $(n^{-1/2}S_n^{\mathrm{pl}}(f))_{n\geqslant 1}$ 
to a standard Brownian motion in the space $\mathcal H_{1/2-1/p}^o[0,1]$. 
\end{theo}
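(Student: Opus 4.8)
The plan is to establish the two classical ingredients of weak convergence in the separable Banach space $\mathcal H^o_\al[0,1]$ with $\al:=1/2-1/p$, namely convergence of the finite dimensional distributions and tightness, and to show that \eqref{condition_suff_iid} is precisely what governs the latter. The finite dimensional distributions are the soft part: since $\pa{f\circ T^j}_{j\geqslant 0}$ is i.i.d., centered and of unit variance, the multivariate central limit theorem applied to the independent increments gives, for fixed $0\leqslant t_1<\dots<t_m\leqslant 1$, the convergence of $n^{-1/2}\pa{S_n^{\mathrm{pl}}(f,t_1),\dots,S_n^{\mathrm{pl}}(f,t_m)}$ to the Gaussian vector $\pa{W_{t_1},\dots,W_{t_m}}$; the polygonal correction $n^{-1/2}(nt-[nt])f\circ T^{[nt]+1}$ is negligible in probability because $f\in L^2$. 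This step uses neither $p$ nor \eqref{condition_suff_iid}.

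The heart of the matter is tightness, for which I would use the criterion recalled before Notation~\ref{first_notation} together with Remark~\ref{rmq:simplification_sequential_norm}: it suffices to prove that for every $\eps>0$
\[
\lim_{J\to\infty}\limsup_{n\to\infty}\mu\ens{\sup_{j\geqslant J}2^{\al j}n^{-1/2}\max_{0\leqslant k<2^j}\abs{S_n^{\mathrm{pl}}(f,r_{k+1,j})-S_n^{\mathrm{pl}}(f,r_{k,j})}>\eps}=0.
\]
Up to the two endpoint corrections, the increment $S_n^{\mathrm{pl}}(f,r_{k+1,j})-S_n^{\mathrm{pl}}(f,r_{k,j})$ is the block sum of length $u_{k+1,j}-u_{k,j}\leqslant 2n2^{-j}$. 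I would split the supremum at the threshold $j\approx\log n$. For the coarse levels $J\leqslant j\leqslant \log n$, the strategy is to truncate $f$ at a level $\tau_{n,j}$ tuned to the scaling factor $2^{\al j}n^{-1/2}$, to control the centered truncated block sums by a Fuk--Nagaev or Bernstein type inequality (the Gaussian contribution being summable in $j$ precisely because $\al<1/2$), and to absorb the contribution of the values exceeding $\tau_{n,j}$ directly through \eqref{condition_suff_iid}. For the fine levels $j>\log n$ the polygonal line is affine between consecutive integer abscissas, so each increment reduces to a fraction of a single $f\circ T^i$ and the whole tail is governed by $n^{-1/p}\max_{0\leqslant i<n}\abs{f\circ T^i}$.

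This last quantity also drives the converse. If the invariance principle holds then $\pa{n^{-1/2}S_n^{\mathrm{pl}}(f)}_{n\geqslant 1}$ is tight, which forces $n^{-1/p}\max_{0\leqslant i<n}\abs{f\circ T^i}\to 0$ in probability; by independence and equidistribution this is equivalent to $n\mu\ens{\abs f>\eps n^{1/p}}\to 0$ for every $\eps>0$, that is, to $\lim_{t\to\infty}t^p\mu\ens{\abs f>t}=0$. The mechanism is that at the finest dyadic scale $2^j\approx n$ a single large jump alone contributes $\approx n^{\al-1/2}\abs{f\circ T^i}=n^{-1/p}\abs{f\circ T^i}$ to the sequential norm, so heavy tails immediately destroy tightness.

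I expect the main obstacle to be the uniform control \emph{over all levels simultaneously} in the sufficiency part: the truncation threshold $\tau_{n,j}$ must keep the Gaussian contribution summable in $j$ while leaving the large-jump contribution absorbable by \eqref{condition_suff_iid}, and these two demands are in tension exactly at the critical exponent $\al=1/2-1/p$. Turning the Fuk--Nagaev estimates into a bound that is summable in $j$ and uniform in $n$, and correctly patching the two regimes $j\lessgtr\log n$, is where the real work lies; the finite dimensional and necessity parts are comparatively routine.
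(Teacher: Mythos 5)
First, a point of order: the paper does not prove Theorem~\ref{thm_R_S} at all. It is quoted from Ra\v{c}kauskas and Suquet \cite{MR2000642}, so there is no internal proof to compare yours against; your proposal has to be judged on its own. Its architecture does match the original one: finite dimensional convergence by the multivariate CLT, tightness through the sequential norm and the criterion of \cite{MR1736910}, a split of the dyadic levels at $j\approx\log n$, and reduction of the fine levels to $n^{-1/p}\max_{0\leqslant i<n}\abs{f\circ T^i}$. The necessity direction is essentially complete, with one caveat: the coefficient $\lambda_r$ of a polygonal line vanishes whenever the line is affine on $[r^-,r^+]$ (the inequality of Remark~\ref{rmq:simplification_sequential_norm} only goes one way), so a single large value of $f\circ T^{l+1}$ need not show up in the sequential norm at the level you point to unless $r$ sits near the kink $l/n$. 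It is cleaner to lower-bound the H\"older modulus directly, $w_{\al}(n^{-1/2}S_n^{\mathrm{pl}}(f),\delta)\geqslant n^{\al-1/2}\abs{f\circ T^{l+1}}$ for $1/n<\delta$, and then conclude as you do: tightness forces $n^{-1/p}\max_{0\leqslant i<n}\abs{f\circ T^i}\cvpr 0$, which for an i.i.d. sequence is equivalent to \eqref{condition_suff_iid}.

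The genuine gap is in the sufficiency direction, which is where the whole content of the theorem lies. You announce a truncation at a level $\tau_{n,j}$ ``tuned to the scaling factor'', a Fuk--Nagaev or Bernstein bound for the truncated centered block sums, and absorption of the exceedances by \eqref{condition_suff_iid}; but you never exhibit $\tau_{n,j}$, never verify that the exponential term is summable over $J\leqslant j\leqslant\log n$ uniformly in $n$, and never control the recentering $\mathbb E\left[f\chi\ens{\abs f>\tau_{n,j}}\right]$ at the critical exponent $\al=1/2-1/p$. As you yourself note, the two requirements are in tension exactly at this exponent; that the condition $\lim_{t\to\infty}t^p\mu\ens{\abs f>t}=0$ (rather than $f\in\mathbb L^p$, or a logarithmically corrected tail) is precisely what closes the estimate \emph{is} the theorem, and cannot be taken on faith. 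As written, the proposal is a correct plan whose decisive computation is missing. For a worked-out version of exactly this bookkeeping see \cite{MR2000642}, or compare the proof of Proposition~\ref{generalized_Doob} in this paper, where the same dyadic decomposition is pushed through for martingale differences using Nagaev's inequality \eqref{nag} in place of the i.i.d.\ exponential bounds.
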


 \subsection{Some facts about the $\mathbb L^{p,\infty}$ spaces}
 
 In the rest of the paper, $\chi$ denotes the indicator function.
 Let $p>2$. We define the $\mathbb L^{p,\infty}$ space as the collection 
 of functions $f\colon\Omega\to \mathbb R$ such that the quantity 
 \begin{equation}
  \norm f_{p,\infty}^p:=\sup_{t>0}t^p\mu\ens{\abs f>t}<\infty.
  \end{equation}
 This quantity is denoted like a norm, while it is not a norm (the triangle inequality 
 may fail, for example if $X=[0,1]$ endowed with the Lebesgue measure, 
 $f(x):=x^{-1/p}$ and $g(x):=f(1-x)$; in this case $\norm{f+g}_{p,\infty}
 \geqslant 2^{1+1/p}$ but $\norm{f}_{p,\infty}+\norm{g}_{p,\infty}=2$). However, there 
 exists a 
 constant $\kappa_p$ such that for each $f$, 
 \begin{equation}\label{link_weak_lp_norms}
  \norm f_{p,\infty}\leqslant \sup_{A:\mu(A)>0}\mu(A)^{-1+1/p}
  \mathbb E[\abs f\chi_A]\leqslant \kappa_p\norm f_{p,\infty}
 \end{equation}
 and $N_p(f):=\sup_{A:\mu(A)>0}\mu(A)^{-1+1/p}
  \mathbb E[\abs f\chi_A]$ defines a norm.
 The first inequality in \eqref{link_weak_lp_norms} can be seen 
 from the estimate $t\mu\ens{\abs f>t}\leqslant 
 \mathbb E\left[\abs f\chi\ens{ \abs f>t}\right]$; for the second one, 
 we write 
 \begin{equation}
   \mathbb E[\abs f\chi_A]=\int_0^{+\infty}\mu\left(
   \ens{\abs f>t}\cap A\right)\mathrm dt
   \leqslant\int_0^{+\infty}\min\ens{\mu
   \ens{\abs f>t},\mu(A)}\mathrm dt,
 \end{equation}
 and we bound the integrand by $\min\ens{t^{-p}\norm{f}^p_{p,\infty},\mu(A)}$.
 
 A function $f$ satisfies \eqref{condition_suff_iid} if and only if it belongs 
 to the closure of bounded functions with respect to $N_p$. Indeed, if $f$ satisfies 
 \eqref{condition_suff_iid}, then the sequence $(f\chi\abs{f<n})_{n\geqslant 1}$ 
 converges to $f$ in $\mathbb L^{p,\infty}$. If $N_p(f-g)<\varepsilon$ with 
 $g$ bounded, then 
 \begin{equation}
  \limsup_{t\to \infty}t^p\mu\ens{\abs f>t}\leqslant 
  \limsup_{t\to \infty}t^p\mu\ens{\abs{f-g}>t/2} \leqslant 2^p\varepsilon.
 \end{equation}

 We now provide two technical lemmas about $\mathbb L^{p,\infty}$ spaces. The 
 first one will be used in the proof of the weak invariance principle 
 for martingales, since we will have to control the tail function 
 of the random variables involved in 
 the construction of the truncated martingale (cf.\eqref{eq:definition_m_R}). The second 
 one will provides an estimation of the $\mathbb L^{p,\infty}$ norm of a simple function, 
 which will be used in the proof of Theorem~\ref{counter_example}, since the function 
 $m$ is contructed as a series of simple functions.
 
 \begin{lem}\label{tail_cond_expect}
  If $\lim_{t\to \infty}t^p\mu\ens{\abs f>t}=0$, then for each sub-$\sigma$-algebra 
  $\mathcal A$, we have $\lim_{t\to \infty}t^p\mu\ens{\mathbb E[\abs f\mid \mathcal A]>t}=0$.
 \end{lem}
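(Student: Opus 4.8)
The plan is to exploit the norm $N_p$ from \eqref{link_weak_lp_norms} together with the characterisation, established just above the statement, that a function satisfies \eqref{condition_suff_iid} precisely when it lies in the $N_p$-closure of the bounded functions. Since $N_p$ and $\norm{\cdot}_{p,\infty}$ depend only on $\abs f$, I would first record the explicit approximation coming from that characterisation: writing $f_n:=\abs f\,\chi\ens{\abs f<n}$ and $R_n:=\abs f\,\chi\ens{\abs f\geqslant n}$, the hypothesis gives $N_p(R_n)\to 0$ as $n\to\infty$, while $f_n$ is bounded by $n$. Setting $g:=\mathbb E[\abs f\mid\mathcal A]$, linearity of the conditional expectation yields the decomposition $g=\mathbb E[f_n\mid\mathcal A]+\mathbb E[R_n\mid\mathcal A]$, in which the first summand is bounded by $n$ and the second, $g_n:=\mathbb E[R_n\mid\mathcal A]\geqslant 0$, must be shown to be small in the weak-$\mathbb L^p$ sense, uniformly enough to conclude.

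The heart of the argument is the contraction estimate $\norm{g_n}_{p,\infty}\leqslant N_p(R_n)$. The key observation is that, $g_n$ being $\mathcal A$-measurable, the super-level set $B_t:=\ens{g_n>t}$ belongs to $\mathcal A$; hence the very definition of the conditional expectation gives $\mathbb E[g_n\chi_{B_t}]=\mathbb E[R_n\chi_{B_t}]$. Combining the Markov-type bound $t\,\mu(B_t)\leqslant \mathbb E[g_n\chi_{B_t}]$ with the defining inequality $\mathbb E[R_n\chi_{B_t}]\leqslant N_p(R_n)\,\mu(B_t)^{1-1/p}$ coming from \eqref{link_weak_lp_norms}, I obtain $t\,\mu(B_t)\leqslant N_p(R_n)\,\mu(B_t)^{1-1/p}$, that is $t^p\mu\ens{g_n>t}\leqslant N_p(R_n)^p$, which is exactly the control I need on the tail of $g_n$.

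It then remains to transfer this to $g$ itself, and the order in which the two limits are taken is what produces the little-$o$ conclusion. Since $g\leqslant n+g_n$, for $t>n$ one has $\ens{g>t}\subseteq\ens{g_n>t-n}$, whence $t^p\mu\ens{g>t}\leqslant \bigl(t/(t-n)\bigr)^p N_p(R_n)^p$. Letting $t\to\infty$ with $n$ fixed kills the factor $\bigl(t/(t-n)\bigr)^p\to 1$, so $\limsup_{t\to\infty}t^p\mu\ens{g>t}\leqslant N_p(R_n)^p$; finally letting $n\to\infty$ and using $N_p(R_n)\to 0$ gives the claim. The main obstacle, and the step I would be most careful about, is precisely this coupling of the boundedness splitting with the order of limits: one must truncate $\abs f$ rather than $f$ so that the remainder $R_n$ is nonnegative and its $N_p$-norm genuinely tends to $0$, and one must send $t\to\infty$ before $n\to\infty$, since the bounded part $\mathbb E[f_n\mid\mathcal A]\leqslant n$ ceases to contribute to the tail only in the limit $t\to\infty$.
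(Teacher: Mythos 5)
Your proof is correct and rests on exactly the same mechanism as the paper's: the superlevel set of $\mathbb E[\abs f\mid\mathcal A]$ is $\mathcal A$-measurable, so Markov's inequality combined with the defining inequality for $N_p$ gives $t^p\mu\ens{\mathbb E[\abs f\mid\mathcal A]>t}\leqslant N_p(\cdot)^p$, after which a truncation of $\abs f$ at a finite level lets the hypothesis kill the remainder. The only difference is organizational — you truncate $\abs f$ before conditioning and absorb the bounded part into a shift of $t$, whereas the paper truncates inside the $N_p$ estimate after conditioning — so this is essentially the paper's argument.
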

 \begin{proof}
 For simplicity, we assume that $f$ is non-negative.
  For a fixed $t$, the set $\ens{\mathbb E[f\mid \mathcal A]>t}$ belongs to 
  the $\sigma$-algebra $\mathcal A$, hence 
  \begin{equation}
   t\mu\ens{\mathbb E[f\mid \mathcal A]>t}\leqslant 
   \mathbb E\left[\mathbb E[f\mid\mathcal A]\chi\ens{\mathbb E[f\mid \mathcal A]>t}\right]
   =\mathbb E[f\chi\ens{\mathbb E[ f\mid \mathcal A]>t}].
  \end{equation}
 
 By definition of $N_p$, 
 \begin{equation}
  \mathbb E[f\chi\ens{\mathbb E[f\mid \mathcal A]>t}]\leqslant 
  N_p\left(f\chi\ens{\mathbb E[f\mid \mathcal A]>t}\right)\mu\ens{\mathbb E[f\mid 
  \mathcal A]>t}^{1-1/p},
 \end{equation}
 
 hence 
 \begin{equation}
  t^p\mu\ens{\mathbb E[ f\mid \mathcal A]>t}\leqslant 
  N_p\left(f\chi\ens{\mathbb E[ f\mid \mathcal A]>t}\right)^p.
 \end{equation}

 Notice that 
 \begin{equation}
 \forall s>0, \quad N_p\left(f\chi\ens{\mathbb E[f\mid \mathcal A]>t}\right)
  \leqslant s\mu\ens{\mathbb E[ f\mid \mathcal A]>t}^{1/p}+
  N_p\left(f\chi\ens{f >s}\right),
 \end{equation}
 
 hence 
 \begin{equation}
  \limsup_{t\to \infty}\mathbb E[f\chi\ens{\mathbb E[\abs f\mid \mathcal A]>t}]\leqslant
  N_p(f\chi\ens{f >s}\leqslant \kappa_p\sup_{x\geqslant s}x^p\mu\ens{f>x}.
 \end{equation}

 By the assumption on the function $f$, the right hand side goes to $0$ as $s$ goes to infinity, 
 which concludes the proof of the lemma. 
 \end{proof} 
 
 \begin{lem}\label{estimate_norm_simple_function}
  Let $f:=\sum_{i=0}^Na_i\chi(A_i)$, where the family $(A_i)_{i=0}^N$ is pairwise 
  disjoint and $0\leqslant a_N<\dots<a_0$. Then 
  \begin{equation}
   \norm f_{p,\infty}^p\leqslant \max_{0\leqslant j\leqslant N}a_j^p\sum_{i=0}^j
   \mu(A_i).
  \end{equation}
 \end{lem}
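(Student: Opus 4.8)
The plan is to compute the tail function $t\mapsto\mu\ens{\abs f>t}$ explicitly and then exploit the strict ordering of the values $a_i$. Since every $a_i$ is non-negative we have $\abs f=f$, and since the sets $A_i$ are pairwise disjoint, for each threshold $t>0$ the superlevel set decomposes as the disjoint union $\ens{\abs f>t}=\bigcup_{i\,:\,a_i>t}A_i$, so that
\[
  \mu\ens{\abs f>t}=\sum_{i\,:\,a_i>t}\mu(A_i).
\]

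First I would record the key structural observation: because $a_N<\dots<a_0$, the index set $\ens{i:a_i>t}$ is an initial segment $\ens{0,1,\dots,j}$ (possibly empty). Concretely, fix $t>0$; if $a_i\leqslant t$ for every $i$, then $t^p\mu\ens{\abs f>t}=0$ and there is nothing to estimate, so I may assume otherwise and let $j=j(t)$ be the largest index with $a_j>t$. The monotonicity of the $a_i$ then forces $a_i>t$ to hold exactly for $0\leqslant i\leqslant j$, whence $\mu\ens{\abs f>t}=\sum_{i=0}^{j}\mu(A_i)$. Bounding $t^p$ by $a_j^p$ via $t<a_j$ yields
\[
  t^p\mu\ens{\abs f>t}=t^p\sum_{i=0}^{j}\mu(A_i)
  \leqslant a_j^p\sum_{i=0}^{j}\mu(A_i)
  \leqslant \max_{0\leqslant j\leqslant N}a_j^p\sum_{i=0}^{j}\mu(A_i),
\]
and taking the supremum over all $t>0$ gives the claimed bound on $\norm f_{p,\infty}^p$.

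I do not expect a genuine obstacle in this argument; the only step requiring care is the identification of the superlevel sets as \emph{nested initial segments}, which is precisely where the hypotheses (the $a_i$ strictly decreasing and the $A_i$ disjoint) enter. Once this is in hand the estimate is immediate, since on each interval $[a_{j+1},a_j)$ where $j(t)=j$ is constant the quantity $t^p\mu\ens{\abs f>t}$ is increasing in $t$ and hence maximized as $t\to a_j^-$, which is exactly what the bound $t^p\leqslant a_j^p$ captures. It is worth noting that the inequality is one-sided (a genuine estimate rather than an equality) because the maximizing threshold $a_j$ is not attained in the superlevel set, and because different indices $j$ contribute different terms $a_j^p\sum_{i=0}^j\mu(A_i)$ whose maximum need not coincide with the actual supremum of the tail.
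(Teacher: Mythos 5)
Your argument is correct and is essentially the paper's own proof: both identify the superlevel set $\ens{f>t}$ as an initial segment $\bigcup_{i=0}^{j}A_i$ determined by where $t$ falls among the ordered values $a_i$, and then bound $t^p$ by $a_j^p$ on the corresponding range of $t$. Your phrasing via the largest index $j(t)$ with $a_{j}>t$ is just a slight repackaging of the paper's piecewise-constant formula for $\mu\ens{f>t}$ on the intervals $(a_{j+1},a_j]$, and if anything it handles the endpoints $t=a_j$ a touch more carefully.
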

\begin{proof}
 We have the equality
 \begin{equation}
  \mu\ens{f>t}=
   \sum_{j=0}^N\chi_{(a_{j+1}, a_j]}(t)\sum_{i=0}^j\mu(A_i),
 \end{equation}
where $a_{N+1}:=0$, therefore 
\begin{equation}
 t^p\mu\ens{f>t}\leqslant \max_{0\leqslant j\leqslant N}a_j^p\sum_{i=0}^j\mu(A_i).
\end{equation}

\end{proof}

 \section{Main results}
 
The goal of the paper is to give a sharp sufficient condition on the moments
of a strictly stationary martingale difference sequence which guarantees the 
weak invariance principle in $\mathcal H_\alpha^o[0,1]$ for a fixed $\alpha$.
 
 We first show that Theorem~\ref{thm_R_S} does not extend to strictly 
 stationary ergodic martingale difference sequences, that is, sequences 
 of the form $(m\circ T^i)_{i\geqslant 0}$ such that $m$ is $\mathcal M$ 
 measurable and $\mathbb E[m\mid T\mathcal M]=0$ for some sub-$\sigma$-algebra 
 $\mathcal M$ satisfying $T\mathcal M\subset\mathcal M$. 
 
 An application of Kolmogorov's continuity criterion shows that if 
 $(m\circ T^i)_{i\geqslant 0}$ is a martingale difference sequence such 
 that $m\in\mathbb L^{p+\delta}$ for some positive $\delta$ and $p>2$, 
 then the partial sum process $(n^{-1/2}S_n^{\mathrm{pl}}(m))_{n\geqslant 1}$ 
 is tight in $\mathcal H^o_{1/2-1/p}[0,1]$
  (see \cite{MR1108512}).

 We provide a condition on the quadratic variance which improves the previous 
 approach (since the previous condition can be replaced by $m\in\mathbb L^p$). 
 Then using martingale approximation we can provide a Hannan type condition 
 which guarantees the weak invariance principle in $\mathcal H_\alpha^o[0,1]$.
 
 \begin{theo}\label{counter_example}
 Let $p>2$ and $(\Omega,\mathcal F,\mu, T)$ be a dynamical system with positive entropy. 
 There exists a function $m\colon \Omega\to \R$ and a $\sigma$-algebra 
 $\mathcal M$ for which $T\mathcal M\subset\mathcal M$ such that:
 \begin{itemize}
  \item the sequence $(m\circ T^i)_{i\geqslant 0}$ is a martingale difference sequence 
  with respect to the filtration $(T^{-i}\mathcal M)_{i\geqslant 0}$;
  \item the convergence $\lim_{t\to +\infty}t^p \mu \ens{\abs m> t}=0$ takes place;
  \item the sequence $(n^{-1/2}S_n^{\mathrm{pl}}(m))_{n\geqslant 1}$ is not 
  tight in $\mathcal H^o_{1/2-1/p}[0,1]$.
 \end{itemize}
 \end{theo}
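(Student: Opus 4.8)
The plan is to construct a counterexample showing that the tail condition $\lim_{t\to\infty}t^p\mu\{\abs m>t\}=0$, sufficient in the i.i.d.\ case, fails to imply tightness for stationary ergodic martingale differences. The key idea is that tightness in $\mathcal H^o_{1/2-1/p}$ requires control of the increments $S_n^{\mathrm{pl}}(m,r_{k+1,j})-S_n^{\mathrm{pl}}(m,r_{k,j})$ at \emph{all} dyadic scales $j$ simultaneously (via Remark~\ref{rmq:simplification_sequential_norm}), whereas the tail condition only controls a single random variable. For a well-chosen $m$, the martingale increments over short dyadic blocks can be made anomalously large with non-negligible probability, defeating the tightness criterion even though the marginal tail is well-behaved.

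First I would exploit the positive entropy hypothesis: by the Sinai/Krieger theory, a system with positive entropy admits an independent (Bernoulli) factor, so I can find a sequence of independent random variables adapted to the filtration $(T^{-i}\mathcal M)_{i\geqslant 0}$ on which to build $m$. The function $m$ will be constructed as a \emph{series of simple functions} $m=\sum_k m_k$ supported on sets of carefully decreasing measure, exactly as flagged in the remark before Lemma~\ref{estimate_norm_simple_function}. The role of that lemma is to verify the second bullet point: by bounding $\norm{m}_{p,\infty}$ and, more importantly, by controlling the tails of truncations, I would arrange that the block contributions $a_k\chi(A_k)$ have $a_k^p\mu(A_k)\to 0$ along the series so that $\lim_{t\to\infty}t^p\mu\{\abs m>t\}=0$ holds, even though $\sum_k a_k^p\mu(A_k)$ (the $\mathbb L^p$-type sum) may diverge. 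The martingale difference property $\mathbb E[m\mid T\mathcal M]=0$ will be ensured by centering each simple building block against the appropriate conditioning $\sigma$-algebra, using the independence coming from the Bernoulli factor.

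The heart of the argument is the failure of tightness (third bullet). I would pick a sparse sequence of scales $j_\ell$ and locate, within the $\ell$-th dyadic block of length $\approx n2^{-j_\ell}$, a rare event $A_{k_\ell}$ on which $m\circ T^i$ takes a value of order $a_{k_\ell}$. Along a subsequence $n=n_\ell$ matched to $j_\ell$, the normalized increment $2^{\alpha j_\ell}n^{-1/2}\abs{S_n^{\mathrm{pl}}(m,r_{k+1,j_\ell})-S_n^{\mathrm{pl}}(m,r_{k,j_\ell})}$ will be forced to exceed a fixed $\varepsilon>0$ with probability bounded away from $0$. Concretely, one single large jump of size $a_{k_\ell}$ inside a block contributes $\approx 2^{\alpha j_\ell}n^{-1/2}a_{k_\ell}$ to the sequential norm; choosing $a_{k_\ell}\sim (n2^{-j_\ell})^{1/p}$ (the largest size still compatible with the tail condition) makes this of constant order because $\alpha=1/2-1/p$. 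Since these events can be made to occur with non-vanishing probability at arbitrarily high scales $J$, the tightness criterion $\lim_{J\to\infty}\limsup_{n}\mu\{\sup_{j\geqslant J}2^{j\alpha}\max_r\abs{\lambda_r(\xi_n)}>\varepsilon\}=0$ is violated.

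\textbf{The main obstacle} I anticipate is the simultaneous reconciliation of the three requirements: the building blocks must be large enough (in amplitude $a_k$ on thin sets $A_k$) to destroy tightness at high scales, yet small enough in the weak-$L^p$ sense to keep $t^p\mu\{\abs m>t\}\to 0$, all while respecting the martingale constraint. This is a delicate balancing of parameters $(a_k,\mu(A_k),j_\ell,n_\ell)$, and the precise bookkeeping—ensuring the tail contributions from \emph{different} blocks do not accumulate to spoil the second bullet, which is where Lemma~\ref{estimate_norm_simple_function} does the essential work—is the technical crux. A secondary difficulty is confirming that the large increment arises from the martingale part and is not washed out by cancellation or by the conditional centering; here the independence furnished by the positive-entropy (Bernoulli) factor is what lets me treat the relevant block sums as genuinely independent across scales and compute the relevant probabilities explicitly.
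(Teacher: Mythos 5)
Your overall scaffolding (exploit positive entropy to get an independent Bernoulli-type factor, build $m$ as a series of simple functions whose weak-$\mathbb L^p$ norms are controlled by Lemma~\ref{estimate_norm_simple_function}, and defeat the tightness criterion at high dyadic scales along a subsequence $n_\ell$) matches the paper's strategy. But the concrete mechanism you propose for destroying tightness --- ``one single large jump of size $a_{k_\ell}$ inside a block'' with $a_{k_\ell}\sim (n2^{-j_\ell})^{1/p}$ --- does not work, and this is a genuine gap rather than a bookkeeping issue. A single jump of size $a$ contributes at most $\max_{j\leqslant \log n}2^{\alpha j}n^{-1/2}a=n^{-1/p}a$ to the sequential norm, so you need $a\gtrsim n^{1/p}$ for a constant-order contribution; the tail condition then forces $\mu\ens{\abs m>\varepsilon n^{1/p}}=o(n^{-1})$, so the probability that such a jump occurs anywhere among $n$ iterates vanishes. (With your choice $a\sim(n2^{-j})^{1/p}$ the contribution is $2^{j(1/2-2/p)}n^{-(1/2-1/p)}$, which is not of constant order either.) This is precisely why the tail condition \emph{is} sufficient in the i.i.d.\ case (Theorem~\ref{thm_R_S}): single large values are the only danger there, and the tail hypothesis neutralizes them. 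Any counterexample must therefore use a different, genuinely non-i.i.d.\ mechanism.

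The paper's mechanism is the following, and you would need something equivalent. Write $m=g\cdot f$ where $(g\circ T^i)$ is a centered i.i.d.\ sequence measurable with respect to a factor $\mathcal B$, and $f\geqslant 0$ is measurable with respect to an \emph{independent} aperiodic factor $\mathcal C$ on which Rokhlin's lemma produces towers $T^{-i}C_l$, $0\leqslant i<n_l$. (The product structure gives the martingale property for free, since $\mathbb E[gf\mid T\mathcal M]=f\,\mathbb E[g\mid T\mathcal M]=0$.) On a tower, $f$ follows a deterministic staircase profile taking the value $L_l^{-1}(n_l/k_{l,j})^{1/p}$ on a stretch of length $\approx k_{l,j}$, so the increment of $S(gf)$ over that stretch is this amplitude times a sum of $\approx k_{l,j}$ i.i.d.\ signs, i.e.\ $\approx L_l^{-1}(n_l/k_{l,j})^{1/p}\sqrt{k_{l,j}}\,\abs{\mathcal N}$ by the CLT; after the H\"older normalization $n_l^{-1/p}k_{l,j}^{-(1/2-1/p)}$ this is exactly $\abs{\mathcal N}/L_l$. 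Each of the $J_l$ dyadic sub-scales of the tower gives an \emph{independent} trial of the small-probability event $\ens{\abs{\mathcal N}\gtrsim L_l}$, and the condition $J_l\cdot\mu\ens{\abs{\mathcal N}\geqslant 4^{1/p}L_l/\norm g_2}\to 1$ makes at least one trial succeed with probability bounded below. So the large increment is a moderate-deviation fluctuation of a CLT-normalized block sum, amplified by many independent attempts across scales --- not a single heavy-tailed value. Your proposal is missing this idea entirely, as well as the cross-block control (the analogues of \eqref{lac1} and \eqref{lac4}) needed to ensure the contributions of the other $f_i$, $i\neq l$, do not mask the event at stage $l$.
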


 \begin{theo}\label{sufficient_cond_mart}
 Let $(\Omega,\mathcal F,\mu,T)$ be a dynamical system, $\mathcal M$ 
 a sub-$\sigma$-algebra of $\mathcal F$ such that $T\mathcal M\subset\mathcal M$ 
 and $\mathcal I$ the collection of sets $A\in \mathcal F$ such that $T^{-1}=A$. 
 
 Let $p>2$ and let $(m\circ T^j,T^{-i}\mathcal M)$ be a strictly stationary martingale 
  difference sequence. Assume that $t^p\mu\ens{\abs m>t}\to 0$ and $\mathbb E[m^2\mid T\mathcal M]
  \in\mathbb L^{p/2}$. 
  Then 
  \begin{equation}\label{Lamperti_WIP}
   n^{-1/2}S_n^{\mathrm{pl}}(m)\to \eta\cdot W\mbox{ in distribution in }
 \mathcal H_{1/2-1/p}^o[0,1],
  \end{equation}
   where the random variable $\eta$ is given by
 \begin{equation}\label{def_eta}
  \eta=\lim_{n\to \infty}\mathbb E[S_n^2\mid\mathcal I]/n\mbox{ in }\mathbb L^1
 \end{equation} 
 and $\eta$ is independent of the process $(W_t)_{t \in [0,1]}$.
 
 In particular, \eqref{Lamperti_WIP} takes place if $m$ belongs to $\mathbb L^p$.
 \end{theo}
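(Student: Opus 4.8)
The plan is to prove the weak invariance principle in $\mathcal H_{1/2-1/p}^o[0,1]$ by establishing two things separately: convergence of finite-dimensional distributions, and tightness in the Hölder space. The finite-dimensional convergence (to $\eta\cdot W$ with the stated $\eta$) is the classical central limit theorem for stationary ergodic martingale differences in the Cramér--Wold / functional sense, and follows from standard martingale CLT machinery (e.g.\ the results for $\mathbb L^2$ martingale differences together with the ergodic decomposition giving the mixing normal limit governed by $\eta=\lim_n\mathbb E[S_n^2\mid\mathcal I]/n$); the moment assumptions here are far more than enough for that part, so I regard it as routine and would dispatch it by citing the standard martingale invariance principle in $C[0,1]$ or $\mathbb L^2$. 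The real content, and the hard part, is the tightness in the finer Hölder topology.

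\medskip

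For tightness I would use the characterization recalled in the introduction: by Theorem~3 of \cite{MR1736910}, writing $\xi_n:=n^{-1/2}S_n^{\mathrm{pl}}(m)$, it suffices to show that for every $\varepsilon>0$,
\begin{equation}
\lim_{J\to\infty}\limsup_{n\to\infty}\mu\ens{\sup_{j\geqslant J}2^{j\alpha}\max_{r\in D_j}\abs{\lambda_r(\xi_n)}>\varepsilon}=0,
\end{equation}
with $\alpha=1/2-1/p$. Using Remark~\ref{rmq:simplification_sequential_norm}, the increments $\lambda_r(\xi_n)$ are controlled by the dyadic increments $\abs{S_n^{\mathrm{pl}}(m,r_{k+1,j})-S_n^{\mathrm{pl}}(m,r_{k,j})}$, which up to the linear-interpolation correction are sums of the martingale differences $m\circ T^i$ over blocks of length $u_{k+1,j}-u_{k,j}\leqslant 2n2^{-j}$ (Notation~\ref{first_notation}). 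Thus the problem reduces to a maximal/deviation estimate: I must bound, uniformly over the $2^j$ blocks at each level $j$ and then over $j\geqslant J$, the probability that a normalized block sum of martingale differences exceeds $\varepsilon 2^{-j\alpha}n^{1/2}$. The natural tool is a Doob/Burkholder maximal inequality for martingales applied blockwise, combined with the weak-$\mathbb L^p$ tail control on $m$ and the $\mathbb L^{p/2}$ control on the conditional variance $\mathbb E[m^2\mid T\mathcal M]$.

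\medskip

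Concretely, I would split each block sum into a truncated part and a remainder, using a truncation level adapted to the block length (of order $(n2^{-j})^{1/2}$, the typical fluctuation size), so that the truncated differences have good conditional variances handled by the $\mathbb L^{p/2}$ hypothesis while the tail of the discarded large values is handled via the hypothesis $t^p\mu\ens{\abs m>t}\to 0$ and Lemma~\ref{tail_cond_expect}. For the truncated martingale part I would apply Burkholder's inequality in $\mathbb L^{p}$ (or an exponential/Markov bound at the right power) to each block, sum the resulting probabilities over the $2^j$ blocks and over levels $j\geqslant J$, and check that the geometric gain from $2^{j\alpha}$ with $\alpha<1/2$ makes the double series convergent and the $J\to\infty$ tail vanish. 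The main obstacle is precisely calibrating this truncation and the exponent in the maximal inequality so that the weak-$\mathbb L^p$ condition (which is only borderline, not $\mathbb L^{p+\delta}$) still yields a summable bound; this is where the condition $\mathbb E[m^2\mid T\mathcal M]\in\mathbb L^{p/2}$ does the essential work, letting one replace the naive $\mathbb L^{p+\delta}$ requirement coming from Kolmogorov's criterion by the sharp $\mathbb L^p$ statement. Finally, the last sentence of the theorem is immediate: if $m\in\mathbb L^p$ then both $t^p\mu\ens{\abs m>t}\to 0$ and $\mathbb E[m^2\mid T\mathcal M]\in\mathbb L^{p/2}$ follow by Jensen's inequality, so the general statement applies.
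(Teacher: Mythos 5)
Your overall architecture (finite-dimensional convergence by the martingale CLT, tightness via the dyadic sequential-norm criterion of Theorem~3 in \cite{MR1736910}) matches the paper, and the final remark about $m\in\mathbb L^p$ is correct. But the core of the tightness argument has two genuine gaps. First, the summation over levels does not close the way you claim. At level $j$ the block length is $N_j\asymp n2^{-j}$ and the threshold is $y_j=\varepsilon 2^{-j\alpha}n^{1/2}$ with $\alpha p=p/2-1$; applying Burkholder--Rosenthal at exponent $p$ and Markov, the contribution of the conditional-variance term to a single block is of order $y_j^{-p}N_j^{p/2}\norm{\mathbb E[m^2\mid T\mathcal M]}_{p/2}^{p/2}\asymp\varepsilon^{-p}2^{-j}$, so after multiplying by the $2^j$ blocks the bound per level is a constant independent of $j$: there is no geometric gain, the sum over $1\leqslant j\leqslant\log n$ diverges like $\log n$, and the tail over $j\geqslant J$ does not vanish. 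The paper circumvents exactly this by not taking $p$-th moments of the quadratic variation: Nagaev's inequality (Theorem~1 of \cite{MR2021875}) keeps $\left(\sum_i U^i\mathbb E[m^2\mid T\mathcal M]\right)^{1/2}$ inside a tail probability, which is then dominated by the tail of the maximal ergodic average $g:=\sup_N N^{-1}\sum_{i\leqslant N}U^i\mathbb E[m^2\mid T\mathcal M]$; the summation $\sum_j 2^j\mu\ens{g>c\,2^{2j/p}}\leqslant 2\mathbb E(g/c)^{p/2}$ is then finite with no logarithm, and Stein's maximal ergodic theorem \cite{MR0131517} controls $\norm{g}_{p/2}$. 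Your sketch does not contain this (or an equivalent) device, and without it the ``check that the double series is convergent'' step fails.

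Second, even granting a bound on $\sup_{j\geqslant 1}$ uniform in $n$ (which is what Proposition~\ref{generalized_Doob} provides), this does not by itself give tightness: one needs $\limsup_n$ of the $\sup_{j\geqslant J}$ tail to go to $0$ as $J\to\infty$, and since the level-$j$ bounds above do not decay in $j$ this cannot be extracted from the series. The paper's mechanism is a truncation at a fixed level $R$ independent of $n$ and $j$: write $m=m_R+m'_R$ with $m_R$ a bounded martingale difference (hence tight by the classical criterion, so its $J$-tail vanishes for each fixed $R$), while the weak-$\mathbb L^p$ bound of Proposition~\ref{generalized_Doob} applied to $m'_R$ tends to $0$ as $R\to\infty$, using Lemma~\ref{tail_cond_expect} for the conditional-expectation term and monotone convergence for $\mathbb E\left(\mathbb E[m^2\chi\ens{\abs m>R}\mid T\mathcal M]\right)^{p/2}$. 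Your block-adapted truncation at level $(n2^{-j})^{1/2}$ does not play this role. So the proposal identifies the right difficulty but does not resolve it; the missing ingredients are Nagaev's martingale tail inequality together with Stein's maximal ergodic theorem, and the fixed-$R$ truncation scheme.
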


 The key point of the proof of Theorem~\ref{sufficient_cond_mart} is an 
 inequality in the spirit of Doob's one, which gives 
 $n^{-1}\mathbb E\left[\max_{1\leqslant j\leqslant n}S_j(m)^2\right]
 \leqslant 2\mathbb E[m^2]$. It is used in order to establish tightness of
 the sequence $(n^{-1/2}S_n^{\mathrm{pl}}(m))_{n\geqslant 1}$ in the space $C[0,1]$.
 
 \begin{propo}\label{generalized_Doob}
 Let $p>2$. There exists a constant $C_p$ depending only on $p$ such that if 
 $(m\circ T^i)_{i\geqslant 1}$ is a martingale difference sequence, then 
 the following inequality holds:
  \begin{equation}
   \sup_{n\geqslant 1}\norm{\norm{n^{-1/2}S_n^{\mathrm{pl}}(m)}_{\mathcal H^o_{1/2-1/p}}}
   _{p,\infty}^p\leqslant C_p\left(\norm{m}_{p,\infty}^p+
   \mathbb E\left(\mathbb E[m^2\mid T\mathcal M]\right)^{p/2}\right).
  \end{equation}
 \end{propo}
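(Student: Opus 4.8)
The plan is to establish the $\mathbb L^{p,\infty}$ bound on the sequential Hölder norm by controlling the dyadic increments uniformly in the level $j$. By Remark~\ref{rmq:simplification_sequential_norm}, the sequential norm of $n^{-1/2}S_n^{\mathrm{pl}}(m)$ is dominated by
\begin{equation*}
\sup_{j\geqslant 1}2^{\alpha j}n^{-1/2}\max_{0\leqslant k<2^j}\abs{S_n^{\mathrm{pl}}(m,r_{k+1,j})-S_n^{\mathrm{pl}}(m,r_{k,j})},
\end{equation*}
with $\alpha=1/2-1/p$. So the first step is to reduce the problem to estimating, for each dyadic level $j$, the maximal increment of the polygonal Birkhoff sum between consecutive dyadic points. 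Using Notation~\ref{first_notation}, the increment $S_n^{\mathrm{pl}}(m,r_{k+1,j})-S_n^{\mathrm{pl}}(m,r_{k,j})$ is (up to the linear interpolation terms) a block sum $S_{u_{k+1,j}}(m)-S_{u_{k,j}}(m)$ of at most $2n2^{-j}$ martingale differences, provided $j\leqslant\log n$; the levels $j>\log n$ must be handled separately since there the blocks become trivial and the interpolation dominates.

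**Next I would** control the $\mathbb L^{p,\infty}$ norm of the supremum over levels by splitting the $\sup_{j\geqslant 1}$ into a sum over $j$ and exploiting the weak-type nature of the norm. The crucial observation is that each block $S_{u_{k+1,j}}(m)-S_{u_{k,j}}(m)$ is itself a martingale, so the generalized Doob inequality advertised just before the statement — namely $N^{-1}\mathbb E[\max_{1\leqslant\ell\leqslant N}S_\ell(m)^2]\leqslant 2\mathbb E[m^2]$, or rather its $\mathbb L^{p,\infty}$ analogue — applies to each maximal block increment. I expect the right tool to be a Doob-type maximal inequality in $\mathbb L^{p,\infty}$ whose constant is dimension-free in the number of summands, combined with the second technical fact \eqref{link_weak_lp_norms} relating $\norm{\cdot}_{p,\infty}$ to the genuine norm $N_p$, which is subadditive. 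Passing through $N_p$ lets me sum the contributions of the different dyadic blocks and levels without losing a triangle inequality, then convert back to $\norm{\cdot}_{p,\infty}$ at the cost of the universal constant $\kappa_p$.

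**The core estimate** will be: for a single level $j$ with block length $N_j\asymp n2^{-j}$, the weak-$\mathbb L^p$ norm of the maximum over the $2^j$ blocks satisfies a bound of order $N_j\norm m_{p,\infty}^p+N_j^{p/2}\mathbb E(\mathbb E[m^2\mid T\mathcal M])^{p/2}$ — the first term from the large jumps of individual martingale differences and the second from the Gaussian-type fluctuation governed by the conditional variance. After multiplying by $2^{\alpha j p}n^{-p/2}$ and noting that $\alpha p=p/2-1$, the level-$j$ contribution is of order $2^{-j}\norm m_{p,\infty}^p$ for the first term (which sums geometrically over $j$) and of order $\mathbb E(\mathbb E[m^2\mid T\mathcal M])^{p/2}$ for the second, uniformly in $j$; the geometric decay in the second term must come from the maximum over the $2^j$ blocks being absorbed by the weak-$\mathbb L^p$ maximal inequality rather than by a crude union bound. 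This is precisely where $p>2$ is used. I would therefore prove the single-block weak-type martingale inequality first, treating separately the contribution of the truncated differences $m\chi\{\abs m\leqslant s\}$ (bounded, so one uses a conditional-variance/Doob argument in $\mathbb L^2$ rescaled to $\mathbb L^{p,\infty}$) and the large differences $m\chi\{\abs m>s\}$ (estimated directly via the tail).

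**The main obstacle** I anticipate is obtaining the maximal inequality over the $2^j$ dyadic blocks with a constant that does not grow with $2^j$, so that the level-$j$ terms sum to something finite after the weight $2^{\alpha j p}$ is applied. A naive union bound over the $2^j$ blocks would contribute a factor $2^j$ and destroy the summability; the resolution is to view the concatenation of block endpoints as a single martingale sampled at the times $u_{k,j}$ and apply one global Doob-type maximal inequality at each level, so that the $2^j$ maxima cost only a constant. Handling the high levels $j>\log n$, where blocks may be empty and only the affine interpolation piece $(nt-[nt])m\circ T^{[nt]+1}$ survives, is a separate but more elementary nuisance: there the increment is controlled by a single term $\abs{m\circ T^{i}}$ times $n2^{-j}$, and the weight $2^{\alpha j p}n^{-p/2}(n2^{-j})^p=2^{-j}n^{p/2-1}\cdot(\text{bounded})$ can be summed using stationarity and the weak-$\mathbb L^p$ bound on $m$. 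Once both regimes are summed, I take the supremum over $n\geqslant 1$ and collect the constant as $C_p$.
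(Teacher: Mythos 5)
Your overall architecture matches the paper's: reduce to the sequential norm via Remark~\ref{rmq:simplification_sequential_norm}, split the levels at $j=\log n$, handle $j>\log n$ by the interpolation term and stationarity, and for $j\leqslant\log n$ reduce to block sums $S_{u_{k+1,j}}(m)-S_{u_{k,j}}(m)$ over at most $2n2^{-j}$ terms, with a two-term bound (individual large increments plus conditional variance). Your ``core estimate'' for a single block is also the right shape, and is exactly what the paper extracts from Nagaev's weak-type martingale inequality \eqref{nag} (which you do not name, though your truncation sketch is one standard way such inequalities are proved).

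The genuine gap is in how you propose to sum over the $2^j$ blocks and over the levels $j$. You assert that a union bound over blocks ``would contribute a factor $2^j$ and destroy the summability'' and that the fix is one global Doob-type maximal inequality per level. Both claims are wrong. The paper does use the crude union bound, $P_{1,1}(n,t)\leqslant\sum_{j\leqslant\log n}2^j\max_k P(n,j,k,t)$, and the factor $2^j$ is harmless: for the increment term the per-block bound decays like $2^{j(p/2-2)}n^{1-p/2}$, so the sum $\sum_j 2^{j(p/2-1)}n^{1-p/2}$ is dominated by its top term $j=\log n$ and is bounded by $(1-2^{1-p/2})^{-1}$. Your proposed alternative, by contrast, cannot work: bounding $\max_k\abs{S_{u_{k+1,j}}-S_{u_{k,j}}}$ by a single Doob maximal inequality at level $j$ loses the block length and yields a bound of order $n^{1/2}$ independent of $j$, which the weight $2^{\alpha j}n^{-1/2}$ does not control; and even a block-by-block $\mathbb L^p$-moment version gives a constant contribution per level for the variance term, hence a $\log n$ divergence. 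The step you are actually missing is the paper's treatment of the conditional-variance term: the normalized block variance $(2n2^{-j})^{-1}\sum_i U^i\mathbb E[m^2\mid T\mathcal M]$ is dominated, uniformly in $j$, $k$ and $n$, by the ergodic maximal function $\sup_{N}N^{-1}\sum_{i=1}^N U^i\mathbb E[m^2\mid T\mathcal M]$; because the corresponding threshold grows like $2^{2j/p}$ with $j$, the union bound over levels is absorbed by the layer-cake estimate \eqref{eq:norm_Lp_series}, $\sum_j 2^j\mu\ens{\abs g>2^{2j/p}}\leqslant 2\mathbb E\abs g^{p/2}$, and the $\mathbb L^{p/2}$ norm of the maximal function is finally controlled by Stein's maximal ergodic theorem \eqref{bound_birkhoff_sums}. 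Without this mechanism (or an equivalent one) your argument does not close, since nothing in your sketch produces the decay in $j$ needed for the variance contribution.
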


\begin{rem}\label{rmq:counter_example}
As Theorem~\ref{counter_example} shows, the condition 
$\lim_{t\to\infty}t^p\mu\ens{\abs m>t}=0$ alone for martingale difference sequences 
is not sufficient to obtain the weak convergence of $n^{-1/2}S_n^{\mathrm{pl}}(m)$ 
in $\mathcal H_\alpha^o[0,1]$ for $\alpha=1/2-1/p$. For the constructed $m$ in 
Theorem~\ref{counter_example}, the quadratic 
variance is $\kappa m^2$ for some constant $\kappa$ and $m$ does 
not belong to the $\mathbb L^p$ space. 
\end{rem}

By Lemma~A.2 in \cite{MR3020943}, the H\"older norm of 
a polygonal line is reached at two vertices, hence, for a function $g$,
\begin{align}
 \norm{n^{-1/2}S_n^{\mathrm{pl}}(g-g\circ T)}_{\mathcal H^o_{1/2-1/p}}
& =n^{-1/p}\max_{1\leqslant i<j\leqslant n}\frac{\abs{g\circ T^j-g\circ T^i}}{(j-i)^{1/2-1/p}} 
\label{eq:Holder_norm_coboundary}\\
 &\leqslant 2n^{-1/p}\max_{1\leqslant j\leqslant n}\abs{g\circ T^j}.
\end{align}
As a consequence, if $g$ belongs to $\mathbb L^p$, then
the sequence $\left(\norm{n^{-1/2}S_n^{\mathrm{pl}}(g-g\circ T)}_{\mathcal H^o_{1/2-1/p}}
\right)_{n\geqslant 1}$
converges to $0$ in probability . Therefore, we 
can exploit a martingale-coboundary decomposition in $\mathbb L^p$.

\begin{cor}\label{martingale_coboundary}
 Let $p>2$ and let $f$ be an $\mathcal M$-measurable function which can be written as 
 \eq{
  f=m+g-g\circ T,
 }
 where $m,g\in\mathbb L^p$ and $(m\circ T^i)_{i\geqslant 0}$ is a martingale difference sequence 
 for the filtration $(T^{-i}\mathcal M)_{i\geqslant 0}$. 
 Then $n^{-1/2}S_n^{\mathrm{pl}}(f)\to \eta W$ in distribution 
 in $\mathcal H_{1/2-1/p}^o[0,1]$, where $\eta$ is given by \eqref{def_eta} 
 and independent of $W$.
\end{cor}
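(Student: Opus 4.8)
The plan is to prove Corollary~\ref{martingale_coboundary} by combining the martingale invariance principle of Theorem~\ref{sufficient_cond_mart} with the coboundary estimate \eqref{eq:Holder_norm_coboundary}, using the linearity of the polygonal interpolation map $g\mapsto S_n^{\mathrm{pl}}(g)$ and a Slutsky-type argument in the Banach space $\mathcal H_{1/2-1/p}^o[0,1]$.

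First I would verify that the martingale part $m$ satisfies the hypotheses of Theorem~\ref{sufficient_cond_mart}. Since $m\in\mathbb L^p$, the tail condition $t^p\mu\ens{\abs m>t}\to 0$ holds (because $t^p\mu\ens{\abs m>t}\leqslant \mathbb E[\abs m^p\chi\ens{\abs m>t}]\to 0$ by dominated convergence), and moreover $\mathbb E[m^2\mid T\mathcal M]\in\mathbb L^{p/2}$ follows from the conditional Jensen inequality, which gives $\mathbb E\bigl[(\mathbb E[m^2\mid T\mathcal M])^{p/2}\bigr]\leqslant\mathbb E[\abs m^p]<\infty$. Strict stationarity of $(m\circ T^i)_{i\geqslant 0}$ is inherited from the dynamical system. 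Hence Theorem~\ref{sufficient_cond_mart} applies and yields
\begin{equation}
 n^{-1/2}S_n^{\mathrm{pl}}(m)\to\eta\cdot W\quad\text{in distribution in }\mathcal H_{1/2-1/p}^o[0,1].
\end{equation}

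Next I would treat the coboundary part. The map $g\mapsto S_n^{\mathrm{pl}}(f,\cdot)$ is linear in $f$, so $S_n^{\mathrm{pl}}(f)=S_n^{\mathrm{pl}}(m)+S_n^{\mathrm{pl}}(g-g\circ T)$ pointwise on $[0,1]$, and the same decomposition holds in $\mathcal H_{1/2-1/p}^o[0,1]$. By \eqref{eq:Holder_norm_coboundary} together with the remark immediately preceding the corollary, since $g\in\mathbb L^p$ the sequence $\bigl(\norm{n^{-1/2}S_n^{\mathrm{pl}}(g-g\circ T)}_{\mathcal H^o_{1/2-1/p}}\bigr)_{n\geqslant 1}$ converges to $0$ in probability. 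I would make the convergence-to-zero explicit by noting that for any $\eps>0$,
\begin{equation}
 \mu\ens{2n^{-1/p}\max_{1\leqslant j\leqslant n}\abs{g\circ T^j}>\eps}\leqslant \sum_{j=1}^n\mu\ens{\abs{g\circ T^j}>\eps n^{1/p}/2}=n\mu\ens{\abs g>\eps n^{1/p}/2},
\end{equation}
where I used stationarity, and the right-hand side is bounded by $n\cdot (2/(\eps n^{1/p}))^p\mathbb E[\abs g^p\chi\ens{\abs g>\eps n^{1/p}/2}]=(2/\eps)^p\mathbb E[\abs g^p\chi\ens{\abs g>\eps n^{1/p}/2}]$, which tends to $0$ as $n\to\infty$ since $g\in\mathbb L^p$.

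Finally I would conclude by the converging-together (Slutsky) lemma in the separable Banach space $\mathcal H_{1/2-1/p}^o[0,1]$: if $X_n\to X$ in distribution and $\norm{Y_n}\to 0$ in probability, then $X_n+Y_n\to X$ in distribution. Applying this with $X_n=n^{-1/2}S_n^{\mathrm{pl}}(m)$, $Y_n=n^{-1/2}S_n^{\mathrm{pl}}(g-g\circ T)$ and $X=\eta\cdot W$ gives $n^{-1/2}S_n^{\mathrm{pl}}(f)\to\eta\cdot W$ in distribution in $\mathcal H_{1/2-1/p}^o[0,1]$, with $\eta$ given by \eqref{def_eta} and independent of $W$. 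The only substantive points are the routine verification of the hypotheses of Theorem~\ref{sufficient_cond_mart} and the explicit probabilistic estimate for the coboundary; the main conceptual step, the martingale invariance principle itself, is already supplied by Theorem~\ref{sufficient_cond_mart}, so no genuine obstacle remains beyond bookkeeping.
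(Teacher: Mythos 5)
Your proposal is correct and follows essentially the same route the paper intends: the hypotheses of Theorem~\ref{sufficient_cond_mart} hold for $m\in\mathbb L^p$ (tail condition plus conditional Jensen), the coboundary part is handled by the estimate \eqref{eq:Holder_norm_coboundary} stated just before the corollary, and the conclusion follows by a converging-together argument. The paper leaves exactly these steps implicit, so your write-up simply makes the intended proof explicit.
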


We define for a function $h$ the operators $\mathbb E_k(h):=\mathbb E[h\mid T^k\mathcal M]$ 
and $P_i(h):=\mathbb E_i(h)-\mathbb E_{i+1}(h)$. The condition $\sum_{i=0}^{\infty}
\norm{P_i(f)}_2$ was 
introduced by Hannan in \cite{MR0331683} in order to deduce a central limit theorem. 
It actually implies the weak invariance principle (see Corollary~2 in 
\cite{MR2359065}).

 \begin{theo}\label{Hannan_cond}
  Let $p>2$ and let $f$ be an $\mathcal M$-measurable function such that 
  \begin{equation}\label{regularity}
   \mathbb E\left[f\mid \bigcap_{i\in\Z}T^i\mathcal M\right]=0\mbox{ and } 
  \end{equation}
  \begin{equation}\label{Hannan_condition}
   \sum_{i\geqslant 0}\norm{P_i(f)}_p<\infty.
  \end{equation}
 Then $n^{-1/2}S_n^{\mathrm{pl}}(m)\to \eta W$ in distribution 
 in $\mathcal H_{1/2-1/p}^o[0,1]$, where $\eta$ is given by \eqref{def_eta} 
 and independent of $W$.
 \end{theo}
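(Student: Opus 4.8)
The plan is to approximate $f$ by a finite martingale--coboundary decomposition and to control the tail uniformly in $n$ through Proposition~\ref{generalized_Doob}. Since $f$ is $\mathcal M$-measurable we have $\mathbb E_i(f)=f$ for $i\leqslant 0$, hence $P_i(f)=0$ for $i<0$, while the telescoping identity $\sum_{i=0}^N P_i(f)=f-\mathbb E_{N+1}(f)$ combined with the backward martingale convergence theorem (the $\sigma$-algebras $T^i\mathcal M$ decrease because $T\mathcal M\subset\mathcal M$) and the regularity assumption \eqref{regularity} (using $\bigcap_{i\geqslant 0}T^i\mathcal M=\bigcap_{i\in\Z}T^i\mathcal M$) gives $f=\sum_{i\geqslant 0}P_i(f)$ in $\mathbb L^p$. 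For a fixed integer $M$ I would set $f^{(M)}:=\sum_{i=0}^M P_i(f)$ and $R^{(M)}:=f-f^{(M)}=\mathbb E_{M+1}(f)$. A finite telescoping computation yields, for every $n$,
\begin{equation}
 S_n(f^{(M)})=S_n(m^{(M)})+g^{(M)}-g^{(M)}\circ T^n,\quad m^{(M)}:=\sum_{i=0}^M P_i(f)\circ T^i,\ g^{(M)}:=\sum_{j=0}^{M-1}\bigl(\mathbb E_{j+1}(f)-\mathbb E_{M+1}(f)\bigr)\circ T^j.
\end{equation}
Taking $n=1$ exhibits $f^{(M)}=m^{(M)}+g^{(M)}-g^{(M)}\circ T$ with $m^{(M)},g^{(M)}\in\mathbb L^p$; since each $P_i(f)\circ T^i$ is $\mathcal M$-measurable with $\mathbb E[P_i(f)\circ T^i\mid T\mathcal M]=0$, the sequence $(m^{(M)}\circ T^l)_l$ is a martingale difference sequence for $(T^{-l}\mathcal M)_l$. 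Corollary~\ref{martingale_coboundary} then gives $n^{-1/2}S_n^{\mathrm{pl}}(f^{(M)})\to\eta_M W$ in distribution in $\mathcal H^o_{1/2-1/p}[0,1]$, where $\eta_M$ is attached to $f^{(M)}$ via \eqref{def_eta}.

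It remains to estimate the remainder uniformly in $n$, and this is the heart of the argument. Writing $f$ itself as an $\mathbb L^p$ martingale--coboundary would fail: the associated corrector $g=\sum_{j\geqslant 0}\mathbb E_{j+1}(f)\circ T^j$ is only controlled by $\sum_i i\norm{P_i(f)}_p$, which may diverge under \eqref{Hannan_condition}. Instead I would exploit that for each fixed $i$ the sequence $(P_i(f)\circ T^l)_{l\geqslant 1}$ is itself a martingale difference sequence for the filtration $(T^{i-l}\mathcal M)_l$, because $P_i(f)$ is $T^i\mathcal M$-measurable and $\mathbb E[P_i(f)\mid T^{i+1}\mathcal M]=0$. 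Applying Proposition~\ref{generalized_Doob} with $\mathcal M$ replaced by $T^i\mathcal M$ (legitimate since $T(T^i\mathcal M)=T^{i+1}\mathcal M\subset T^i\mathcal M$) gives, uniformly in $n$,
\begin{equation}
 \sup_{n\geqslant 1}\norm{\norm{n^{-1/2}S_n^{\mathrm{pl}}(P_i(f))}_{\mathcal H^o_{1/2-1/p}}}_{p,\infty}^p\leqslant C_p\Bigl(\norm{P_i(f)}_{p,\infty}^p+\mathbb E\bigl(\mathbb E[P_i(f)^2\mid T^{i+1}\mathcal M]\bigr)^{p/2}\Bigr).
\end{equation}

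Since $\norm{P_i(f)}_{p,\infty}\leqslant\norm{P_i(f)}_p$ and, by conditional Jensen on $\mathbb L^{p/2}$, $\mathbb E(\mathbb E[P_i(f)^2\mid T^{i+1}\mathcal M])^{p/2}\leqslant\norm{P_i(f)}_p^p$, the right-hand side is at most $2C_p\norm{P_i(f)}_p^p$. Because $N_p$ is a genuine norm equivalent to $\norm{\cdot}_{p,\infty}$ by \eqref{link_weak_lp_norms}, I can use its subadditivity together with the triangle inequality for the H\"older norm and the linearity of $g\mapsto S_n^{\mathrm{pl}}(g)$ to sum the estimates over $i>M$:
\begin{equation}
 \sup_{n\geqslant 1}N_p\!\left(\norm{n^{-1/2}S_n^{\mathrm{pl}}(R^{(M)})}_{\mathcal H^o_{1/2-1/p}}\right)\leqslant C_p'\sum_{i>M}\norm{P_i(f)}_p ,
\end{equation}
where $C_p'$ depends only on $p$; this tends to $0$ as $M\to\infty$ by \eqref{Hannan_condition}. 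Note that the passage through $N_p$ is essential: it is precisely because $\norm{\cdot}_{p,\infty}$ is not subadditive that one cannot sum the weak-$\mathbb L^p$ bounds directly.

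Finally I would assemble the pieces. Markov's inequality applied to the last display shows $\lim_{M\to\infty}\limsup_{n\to\infty}\mu\ens{\norm{n^{-1/2}S_n^{\mathrm{pl}}(f-f^{(M)})}_{\mathcal H^o_{1/2-1/p}}>\varepsilon}=0$ for every $\varepsilon>0$. Since $R^{(M)}\to 0$ in $\mathbb L^2$, stationarity gives $\eta_M\to\eta$ in $\mathbb L^1$ (so $\eta$ is identified as the limit of the $\eta_M$ and agrees with \eqref{def_eta} for $f$), whence $\eta_M W\to\eta W$ in distribution. Combining the convergence $n^{-1/2}S_n^{\mathrm{pl}}(f^{(M)})\to\eta_M W$ for each fixed $M$, the convergence $\eta_M W\to\eta W$, and the uniform negligibility of the remainder via the standard approximation theorem for weak convergence in a metric space yields $n^{-1/2}S_n^{\mathrm{pl}}(f)\to\eta W$ in $\mathcal H^o_{1/2-1/p}[0,1]$. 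The main obstacle is exactly this remainder bound: one must avoid the global $\mathbb L^p$ corrector and instead dispatch Proposition~\ref{generalized_Doob} level by level, summing the resulting weak-$\mathbb L^p$ controls through the norm $N_p$.
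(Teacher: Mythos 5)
Your proposal is correct and follows essentially the same route as the paper: truncate the Hannan series at level $M$, treat $f^{(M)}=\sum_{i=0}^{M}P_i(f)$ by the martingale--coboundary decomposition (your $m^{(M)}=\sum_{i=0}^M P_i(f)\circ T^i$ coincides with the paper's $\sum_{i=0}^M P_0(U^if)$) via Corollary~\ref{martingale_coboundary}, and control the remainder by applying Proposition~\ref{generalized_Doob} to each martingale difference sequence $(P_i(f)\circ T^l)_l$ separately and summing through the norm $N_p$. Your write-up is in fact slightly more careful than the paper's on two points it leaves implicit --- the passage through the genuine norm $N_p$ to justify summing the weak-$\mathbb{L}^p$ bounds, and the final assembly including $\eta_M\to\eta$ --- but these are refinements of the same argument, not a different one.
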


 \section{Proofs}
 
 \subsection{Proof of Theorem~\ref{counter_example}}
 
 We need a result about dynamical systems of positive entropy for the construction 
 of a counter-example. 
 
 \begin{lem}\label{lem:positive_entropy}
  Let $(\Omega,\mathcal A,\mu,T)$ be an ergodic probability measure preserving system of 
  positive entropy. There exists two $T$-invariant sub-$\sigma$-algebras $\mathcal B$ and 
  $\mathcal C$ of $\mathcal A$ and a function $g\colon\Omega\to \R$ such that:
  \begin{itemize}
   \item the $\sigma$-algebras $\mathcal B$ and $\mathcal C$ are independent;
   \item the function $g$ is $\mathcal B$-measurable, takes the values $-1$, $0$ 
   and $1$, has zero mean and the process $(g\circ T^n)_{n\in\Z}$ is independent;
   \item the dynamical system $(\Omega,\mathcal C,\mu,T)$ is aperiodic.
  \end{itemize}

 \end{lem}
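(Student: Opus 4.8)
The plan is to prove Lemma~\ref{lem:positive_entropy} by exploiting the structural theory of positive entropy systems, specifically the existence of a Bernoulli factor. By the Sinai factor theorem, any ergodic system of positive entropy admits a nontrivial Bernoulli shift as a factor; concretely, there exists a measurable partition generating a $\sigma$-algebra $\mathcal B$ on which $T$ acts as an i.i.d.\ process. First I would invoke this theorem to produce a three-valued i.i.d.\ sequence: take the Bernoulli factor with a state space supporting a function $g$ taking values in $\ens{-1,0,1}$ with zero mean (for instance assigning probabilities so that $\mu\ens{g=1}=\mu\ens{g=-1}$), and let $\mathcal B$ be the $T$-invariant $\sigma$-algebra generated by $(g\circ T^n)_{n\in\Z}$. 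Since the factor is Bernoulli, $(g\circ T^n)_{n\in\Z}$ is automatically an independent process, $g$ is $\mathcal B$-measurable, and $\mathcal B$ is $T$-invariant because it is generated by the full two-sided orbit.

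The second and more delicate part is producing the independent complementary $\sigma$-algebra $\mathcal C$ together with the aperiodicity of the system $(\Omega,\mathcal C,\mu,T)$. The natural approach is a relative version of the splitting: one wants to write the system, up to isomorphism, as a direct product of the Bernoulli factor carrying $g$ and a complementary factor $\mathcal C$ that is independent of $\mathcal B$. Here I would appeal to the theory of Bernoulli factors and the fact that a Bernoulli factor splits off as an independent direct factor of its Bernoulli extension; more robustly, since $(\Omega,\mathcal A,\mu,T)$ has positive entropy one can arrange the Sinai factor to be \emph{strictly} smaller than the whole system, so that there is genuine independent complementary randomness. The complement $\mathcal C$ is taken to be the $\sigma$-algebra of the complementary factor, which is $T$-invariant and independent of $\mathcal B$ by construction of the product structure.

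The aperiodicity of $(\Omega,\mathcal C,\mu,T)$ requires care. I would argue that since the ambient system is ergodic and of positive entropy, it is in particular aperiodic (nonatomic systems with a nontrivial Bernoulli factor cannot have periodic points of positive measure). One then checks that the complementary factor inherits aperiodicity: if $(\Omega,\mathcal C,\mu,T)$ had a periodic part of positive measure, combining it with the independent Bernoulli component would contradict the aperiodicity already established, or one simply absorbs any periodic behaviour into the Bernoulli side. The cleanest route is to take $\mathcal C$ so that the factor $(\Omega,\mathcal C,\mu,T)$ is itself ergodic and nonatomic, hence aperiodic.

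I expect the main obstacle to be the simultaneous independence and invariance requirements: producing a three-valued independent process realised \emph{inside} the given system (not merely as an abstract factor) while retaining an independent $T$-invariant complement of positive enough structure to be aperiodic. The rigorous justification that the Bernoulli factor splits off as an independent direct factor — rather than merely being a factor — is the technical heart of the argument and is where one must invoke the relatively Bernoulli / direct-product decomposition machinery from ergodic theory most carefully.
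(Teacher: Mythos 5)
The paper does not prove this lemma at all: it simply quotes it as Lemma~3.8 of Lesigne and Voln\'y \cite{MR1856684}, so any actual argument you give is already doing more than the text. Your first step (Sinai's factor theorem producing a three-valued, mean-zero i.i.d.\ process $g$ of small entropy, with $\mathcal B$ the $T$-invariant $\sigma$-algebra it generates) is sound, provided you note that by putting $\mu\ens{g=1}=\mu\ens{g=-1}=\varepsilon$ you can make the entropy of the $g$-process smaller than $h(T)$ so that Sinai's theorem applies.

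The genuine gap is in your construction of $\mathcal C$. You want $\mathcal C$ to be an independent \emph{complement} of $\mathcal B$ in the whole system, and you justify its existence by asserting that ``a Bernoulli factor splits off as an independent direct factor.'' That assertion is not a standard theorem in the generality you need: the statement that an ergodic positive-entropy system decomposes as a direct product of a prescribed Bernoulli factor and an independent residual factor is essentially the weak Pinsker problem (resolved only recently by Austin, and even then not in the form ``every Bernoulli factor splits off''). Your fallback for aperiodicity (``absorb any periodic behaviour into the Bernoulli side'') is likewise not an argument. The fix is that the lemma never asks for $\mathcal B\vee\mathcal C=\mathcal A$, so no complement is needed: apply Sinai to obtain a single Bernoulli factor of positive entropy $h'\leqslant h(T)$, use the fact that a Bernoulli shift of entropy $h'$ is isomorphic to a direct product of two Bernoulli shifts of entropies $h_1+h_2=h'$, and let $\mathcal B$ be generated by the coordinate carrying the three-valued process $g$ and $\mathcal C$ by the other coordinate. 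Independence of $\mathcal B$ and $\mathcal C$ is then automatic from the product structure inside that factor, both are $T$-invariant, and $(\Omega,\mathcal C,\mu,T)$ is aperiodic because it is a nontrivial Bernoulli system. With this correction your outline becomes a complete proof; as written, it rests on an unproved (and in general unavailable) splitting theorem.
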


 This is Lemma~3.8 from \cite{MR1856684}. 
 
 We consider the following four increasing sequences of positive integers  $(I_l)_{l\geqslant 1}$, 
 $(J_l)_{l\geqslant 1}$, 
 $(n_l)_{l\geqslant 1}$  and $(L_l)_{l\geqslant 1}$. 
 We define $k_l:=2^{I_l+J_l}$ and impose the conditions:
 \begin{align}
  &\sum_{l=1}^{\infty}\frac 1{L_l}<\infty;\label{lac0}\\
  &\lim_{l\to \infty} J_l\cdot \mu\ens{\abs{\mathcal N}
  \geqslant 4^{1/p}\frac{L_l}{\norm g_2}}=1\label{lac3};\\ 
  &\lim_{l\to \infty}J_l 2^{-I_l/2}=0;\label{lac2}\\
  &\lim_{l\to \infty}n_l\sum_{i>l}\frac{k_i}{n_i}=0;\label{lac1}\\
  &\mbox{for each }l, \quad \sum_{i=1}^{l-1} \frac{k_l}{L_i}\left(\frac{n_i}{2^{I_i}}\right)^{1/p}
  <\frac{n_l^{1/p}}2. \label{lac4}
 \end{align}
 Here $\mathcal N$ denotes a random variable whose distribution is standard normal. 
 Such sequences can be constructed as follows: first pick a sequence $(L_l)_{l\geqslant 1}$ 
 satisfying \eqref{lac0}, for example $L_l=l^2$. Then construct $(J_l)_{l\geqslant 1}$ such that 
 \eqref{lac3} holds. Once the sequence $(J_l)_{l\geqslant 1}$ is constructed, 
 define $\left(I_l\right)_{l\geqslant 1}$ satisfying \eqref{lac2}. Now the 
 sequence $(k_l)_{l\geqslant 1}$ is completely determined. Noticing that 
 \eqref{lac1} is satisfied if the series $\sum_l k_ln_{l-1}/n_l$ converges, we construct 
 the sequence $(n_l)_{l\geqslant 1}$ by induction; once $n_i, i\leqslant l-1$ are defined, 
 we choose $n_l$ such that $n_l\geqslant l^2k_ln_{l-1}$ and \eqref{lac4} holds.
 
 Using Rokhlin's lemma, we can find for any integer $l\geqslant 1$ a measurable set $
 C_l\in\mathcal C$ such that the sets $T^{-i}C_l$, $i=0,\dots,n_l-1$ are 
 pairwise disjoint and $\mu\left(\bigcup_{i=0}^{n_l-1}T^{-i}C_l\right)>1/2$. 
 
 For a fixed $l$, we define 
 \begin{equation}
  k_{l,j}:=2^{I_l+J_l-j}, \quad 0\leqslant j\leqslant J_l,
 \end{equation}
 \begin{multline}
 k_{l,j}:=2^{I_l+J_l-j}, \quad 0\leqslant j\leqslant J_l \mbox{ and } \\
  f_l:=\frac 1{L_l}\sum_{j=0}^{J_l-1}\left(\frac{n_l}{k_{l,J_l-j}}\right)^{1/p}
  \chi\left(\bigcup_{i=k_{l,J_l-j}}^{k_{l,J_l-j-1}-1}T^{-i}C_l\right)+\\
  +\frac 1{L_l}\left(\frac{n_l}{k_{l,J_l}}\right)^{1/p}\chi\left(\bigcup_{i=0}^{k_{l,J_l}-1}
  T^{-i}C_l\right),
 \end{multline}
 \begin{equation}\label{eq:definition_f_m}
  f:= \sum_{l=1}^{+\infty}f_l, \quad m:=g\cdot f,
 \end{equation}
 where $g$ is the function obtained by Lemma~\ref{lem:positive_entropy}.
 
 \begin{propo}\label{decay_tail}
  We have the estimate $\lVert f_l\rVert_{p,\infty}\leqslant \kappa'_p L_l^{-1}$ for 
  some constant $\kappa'_p$ depending only on $p$. As a consequence,
  $\lim_{t\to \infty}t^p\mu\ens{\abs m>t}=0$.
 \end{propo}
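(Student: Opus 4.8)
The plan is to treat each $f_l$ as a nonnegative simple function with pairwise disjoint level sets and to invoke Lemma~\ref{estimate_norm_simple_function} directly. First I would put $f_l$ in the canonical form required there. The sets $T^{-i}C_l$, $0\leqslant i\leqslant n_l-1$, are pairwise disjoint with common measure $\mu(C_l)$, and the indices occurring in $f_l$ range only over $0\leqslant i<k_{l,0}=2^{I_l+J_l}=k_l$, which is at most $n_l$ by the construction of $(n_l)_{l\geqslant 1}$; hence all the indicator sets are genuinely disjoint. Reading off the coefficients, $f_l$ takes the value $a_j:=L_l^{-1}(n_l/k_{l,J_l-j})^{1/p}$ on a union of tower pieces, and since $k_{l,J_l-j}=2^{I_l+j}$ increases with $j$, these values are strictly decreasing, $a_0>a_1>\dots>a_{J_l-1}>0$, exactly as the lemma requires, with corresponding level sets $A_0,\dots,A_{J_l-1}$.

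Next I would compute the cumulative measures of these level sets. The top value $a_0$ is carried both by the ``core'' $\bigcup_{i=0}^{2^{I_l}-1}T^{-i}C_l$ and by the first shell $\bigcup_{i=2^{I_l}}^{2^{I_l+1}-1}T^{-i}C_l$, so $A_0$ has measure $2\cdot2^{I_l}\mu(C_l)$, while each successive shell doubles the number of pieces. A one-line induction then gives that the cumulative measure up to level $j$ equals $2k_{l,J_l-j}\mu(C_l)$. The decisive point --- the cancellation engineered into the dyadic construction --- is that $a_j^p$ is proportional to $n_l/k_{l,J_l-j}$, so the product $a_j^p\sum_{i\leqslant j}\mu(A_i)$ collapses to $2n_l\mu(C_l)L_l^{-p}$ independently of $j$. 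Thus the maximum in Lemma~\ref{estimate_norm_simple_function} is attained at every index and $\norm{f_l}_{p,\infty}^p\leqslant 2n_l\mu(C_l)L_l^{-p}$. Since the tower is disjoint, $n_l\mu(C_l)=\mu(\bigcup_{i=0}^{n_l-1}T^{-i}C_l)\leqslant 1$, which yields $\norm{f_l}_{p,\infty}\leqslant 2^{1/p}L_l^{-1}$; so $\kappa'_p=2^{1/p}$ is admissible.

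For the ``as a consequence'' part I would pass to the genuine norm $N_p$, which is subadditive and, by \eqref{link_weak_lp_norms}, equivalent to $\norm{\cdot}_{p,\infty}$. From the first estimate, $\sum_{l>L}N_p(f_l)\leqslant \kappa_p\kappa'_p\sum_{l>L}L_l^{-1}$, which tends to $0$ by \eqref{lac0}; hence the bounded partial sums $\sum_{l=1}^{L}f_l$ converge to $f$ in $N_p$, so $f$ lies in the $N_p$-closure of bounded functions. By the characterization established right after \eqref{link_weak_lp_norms}, this closure is exactly the class of functions satisfying \eqref{condition_suff_iid}, so $\lim_{t\to\infty}t^p\mu\ens{\abs f>t}=0$. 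Finally $\abs g\leqslant 1$ forces $\abs m\leqslant\abs f$ pointwise, whence $\ens{\abs m>t}\subseteq\ens{\abs f>t}$ and $t^p\mu\ens{\abs m>t}\leqslant t^p\mu\ens{\abs f>t}\to 0$.

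The only genuine obstacle is the bookkeeping of the level sets: one must correctly merge the core term with the first shell and verify that each dyadic shell doubles the mass, so that the per-level contributions $a_j^p\sum_{i\leqslant j}\mu(A_i)$ are constant in $j$. Once that balancing is observed, the estimate for $\norm{f_l}_{p,\infty}$ and the deduction for $m$ are routine.
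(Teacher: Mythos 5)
Your proof is correct and follows essentially the same route as the paper; the only difference is that you apply Lemma~\ref{estimate_norm_simple_function} to all of $f_l$ at once (merging the core block $\bigcup_{i=0}^{k_{l,J_l}-1}T^{-i}C_l$ with the first shell, both of which carry the value $a_0$), whereas the paper bounds that block separately and recombines via the triangle inequality for $N_p$, so your version even yields the cleaner constant $\kappa'_p=2^{1/p}$ in place of $\kappa_p(1+2^{1/p})$. The level-set bookkeeping (cumulative mass $2k_{l,J_l-j}\mu(C_l)$ at level $j$, hence $a_j^p\sum_{i\leqslant j}\mu(A_i)=2n_l\mu(C_l)L_l^{-p}\leqslant 2L_l^{-p}$) and the deduction of the tail estimate for $m$ via $N_p$-subadditivity and $\abs{g}\leqslant 1$ are both correct and match the paper's argument.
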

 
 \begin{proof}
  Notice that 
  \begin{equation}\label{norm_f_l_estimation1}
   \norm{\frac 1{L_l}\left(\frac{n_l}{k_{l,J_l}}\right)^{1/p}\chi\left(\bigcup_{i=0}^{k_{l,J_l}-1}
  T^{-i}C_l\right)}_{p,\infty}^p=\frac 1{L_l^p}\frac{n_l}{k_{l,J_l}}
  k_{l,J_l}\cdot\mu(C_l)\leqslant \frac 1{L_l^p}.
  \end{equation}

 Next, using Lemma~\ref{estimate_norm_simple_function} with $N:=J_l-1$, 
 $a_j:=\frac 1{L_l}\left(\frac{n_l}{k_{l,J_l-j}}\right)^{1/p}$ and 
 $A_j:= \bigcup_{i=k_{l,J_l-j}}^{k_{l,J_l-j-1}-1}T^{-i}C_l$, we obtain 
 \begin{align}
  \norm{\frac 1{L_l}\sum_{j=0}^{J_l-1}\left(\frac{n_l}{k_{l,J_l-j}}\right)^{1/p}
  \chi\left(\bigcup_{i=k_{l,J_l-j}}^{k_{l,J_l-j-1}}T^{-i}C_l\right)}_{p,\infty}^p
  &\leqslant \max_{0\leqslant j\leqslant J_l-1}\left(\frac 1{L_l}
  \left(\frac{n_l}{k_{l,J_l-j}}\right)^{1/p}
  \right)^p  \sum_{i=0}^j\mu(A_j)\\
  &\leqslant \frac 1{L_l^p}\max_{0\leqslant j\leqslant J_l-1}\frac{n_l}{k_{l,J_l-j}}
  \sum_{i=0}^j\frac{k_{l,J_l-i}}{n_l}\\
  &=\frac 1{L_l^p}\max_{0\leqslant j\leqslant J_l-1}\sum_{i=0}^j\frac{2^{I_l+i}}{2^{I_l+j}}\\
  &\leqslant \frac 2{L_l^p}\label{norm_f_l_estimation2},
 \end{align}
 hence by \eqref{link_weak_lp_norms}, \eqref{norm_f_l_estimation1} and \eqref{norm_f_l_estimation2},
 \begin{align}
  \norm{f_l}_{p,\infty} &\leqslant
  N_p\left(\frac 1{L_l}\left(\frac{n_l}{k_{l,J_l}}\right)^{1/p}\chi\left(\bigcup_{i=0}^{k_{l,J_l}-1}
  T^{-i}C_l\right) \right)+\\
  &+ N_p\left(\frac 1{L_l}\sum_{j=0}^{J_l-1}\left(\frac{n_l}{k_{l,J_l-j}}\right)^{1/p}
  \chi\left(\bigcup_{i=k_{l,J_l-j}}^{k_{l,J_l-j-1}}T^{-i}C_l\right) \right)\\
  &\leqslant \kappa_p \norm{\frac 1{L_l}\left(\frac{n_l}{k_{l,J_l}}\right)^{1/p}
  \chi\left(\bigcup_{i=0}^{k_{l,J_l}-1}
  T^{-i}C_l\right)}_{p,\infty}+\\
  &+\kappa_p\norm{\frac 1{L_l}\sum_{j=0}^{J_l-1}\left(\frac{n_l}{k_{l,J_l-j}}\right)^{1/p}
  \chi\left(\bigcup_{i=k_{l,J_l-j}}^{k_{l,J_l-j-1}}T^{-i}C_l\right)}_{p,\infty}\\
 &\leqslant  \frac 1{L_l}\kappa_p\left(1+2^{1/p}\right)
 \end{align}

  We thus define $\kappa'_p:=\kappa_p\left(1+2^{1/p}\right)$.
 
 We fix $\varepsilon>0$; using \eqref{lac0}, we can find an integer $l_0$ such that 
 $\sum_{l>l_0}1/L_l<\varepsilon$. Since the function $\sum_{l=1}^{l_0}gf_l$ is bounded, 
 we have, 
 \begin{align}
  \limsup_{t\to \infty}t^p\mu\ens{\abs m>t}&\leqslant 
  \limsup_{t\to \infty}t^p\mu\ens{\abs{\sum_{l=1}^{l_0}gf_l}>\frac t2}+
  2^p\norm{\sum_{l>l_0}gf_l}_{p,\infty}^p\\
  &=2^p\norm{\sum_{l>l_0}gf_l}_{p,\infty}^p\\
  &\leqslant \left(2\sum_{l>l_0}N_p(f_l)\right)^p\\
  &\leqslant \kappa'_p\left(\sum_{l>l_0}\frac 1{L_l}\right)^p\\
  &\leqslant \kappa'_p\varepsilon^p,
 \end{align}
 where the second inequality comes from inequalities \eqref{link_weak_lp_norms}.
 Since $\varepsilon$ is arbitrary, the proof of Lemma~\ref{decay_tail} is complete. 
 \end{proof}

   We denote by $\mathcal M$ the $\sigma$-algebra generated by $\mathcal C$ 
  and the random variables $g\circ T^k$, $k\leqslant 0$. It satisfies 
 $\mathcal M\subset T^{-1}\mathcal M$. 
 \begin{propo}\label{martingale_difference}
  The sequence $(m\circ T^i)_{i\geqslant 0}$ is a (stationary) 
  martingale difference sequence with 
  respect to the filtration $(T^{-i}\mathcal M)_{i\geqslant 0}$. 
 \end{propo}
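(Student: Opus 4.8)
The plan is to verify the two defining properties of a martingale difference sequence separately, namely that $m$ is $\mathcal{M}$-measurable (so that $m\circ T^i$ is adapted to $T^{-i}\mathcal{M}$) and that $\mathbb{E}[m\mid T\mathcal{M}]=0$, and then to propagate the second identity along the orbit. Since $T$ preserves $\mu$, the relation $\mathbb{E}[X\mid\mathcal{A}]\circ T=\mathbb{E}[X\circ T\mid T^{-1}\mathcal{A}]$ applied with $\mathcal{A}=T\mathcal{M}$ (so that $T^{-1}\mathcal{A}=\mathcal{M}$) turns $\mathbb{E}[m\mid T\mathcal{M}]=0$ into $\mathbb{E}[m\circ T\mid\mathcal{M}]=0$, and composing with $T^{i-1}$ gives $\mathbb{E}[m\circ T^i\mid T^{-(i-1)}\mathcal{M}]=0$ for every $i\geqslant 1$. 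Together with the fact that $(T^{-i}\mathcal{M})_{i\geqslant 0}$ is increasing (which is exactly $\mathcal{M}\subset T^{-1}\mathcal{M}$) this is the martingale difference property; strict stationarity is automatic because $m$ is a fixed function and $T$ is measure preserving, and $m\in\mathbb{L}^1$ since $g$ is bounded and $f\in\mathbb{L}^{p,\infty}\subset\mathbb{L}^1$ by Proposition~\ref{decay_tail}.

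For measurability I would first observe that each $f_l$, hence $f=\sum_l f_l$, is $\mathcal{C}$-measurable: the sets $T^{-i}C_l$ all lie in $\mathcal{C}$ because $C_l\in\mathcal{C}$ and $\mathcal{C}$ is $T$-invariant, so the indicators defining $f_l$ are $\mathcal{C}$-measurable. Since $g=g\circ T^0$ is one of the generators of $\mathcal{M}$ and $\mathcal{C}\subset\mathcal{M}$, the product $m=gf$ is $\mathcal{M}$-measurable. I would then compute $T\mathcal{M}$ explicitly: reading off the generators and using $T\mathcal{C}=\mathcal{C}$ again, one gets $T\mathcal{M}=\sigma\bigl(\mathcal{C},\,g\circ T^{j}:j\leqslant-1\bigr)$. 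The key structural point is that $T\mathcal{M}$ contains $f$ (it is $\mathcal{C}$-measurable) but does \emph{not} contain $g=g\circ T^0$; only the strictly past coordinates $g\circ T^{-1},g\circ T^{-2},\dots$ appear.

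Given this, I would pull $f$ out of the conditional expectation, $\mathbb{E}[m\mid T\mathcal{M}]=f\cdot\mathbb{E}[g\mid T\mathcal{M}]$, reducing everything to $\mathbb{E}[g\mid T\mathcal{M}]=\mathbb{E}[g]=0$. For this it suffices to show that $g$ is independent of $T\mathcal{M}=\mathcal{C}\vee\sigma(g\circ T^j:j\leqslant-1)$, and here two independence structures combine: on one hand $g$ and all the $g\circ T^j$ are $\mathcal{B}$-measurable (because $\mathcal{B}$ is $T$-invariant) while $\mathcal{B}$ and $\mathcal{C}$ are independent, so any $\mathcal{C}$-event factorizes out; on the other hand, within $\mathcal{B}$, the independence of the sequence $(g\circ T^n)_{n\in\Z}$ makes $g$ independent of $\sigma(g\circ T^j:j\leqslant-1)$. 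Testing against products $\chi_A\cdot h\bigl((g\circ T^j)_{j\leqslant-1}\bigr)$ with $A\in\mathcal{C}$ and $h$ bounded measurable, these two facts yield the required factorization, hence independence of $g$ from the generating class and thus from $T\mathcal{M}$.

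The translation-by-$T$ bookkeeping and the factorization of the test integrals are routine. The main obstacle is conceptual rather than computational: one must identify $T\mathcal{M}$ correctly and see that it excludes the current value $g\circ T^0$, since it is precisely this exclusion, together with the zero mean of $g$, that produces the cancellation. Getting the direction of the shift wrong and accidentally including $g$ in $T\mathcal{M}$ would make the argument collapse.
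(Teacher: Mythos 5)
Your proof is correct and follows essentially the same route as the paper: identify $T\mathcal M=\sigma\bigl(\mathcal C\cup\sigma(g\circ T^k,\ k\leqslant -1)\bigr)$ using the $T$-invariance of $\mathcal C$, pull the $\mathcal C$-measurable factor $f$ out of the conditional expectation, and conclude from the independence of $g$ from $T\mathcal M$ together with $\mathbb E[g]=0$. The extra details you supply (the $\mathcal C$-measurability of $f$, the factorization of test integrals establishing independence, and the propagation of the orthogonality along the orbit via the measure-preserving property) are exactly the steps the paper leaves implicit.
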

 
 \begin{proof}
  We have to show that $\mathbb E[m\mid T\mathcal M]=0$. Since the $\sigma$-algebra 
  $\mathcal C$ is $T$-invariant, we have $T\mathcal M=\sigma(\mathcal C\cup \sigma(g\circ T^k, k\leqslant 
  -1))$. This implies 
  \eq{
   \mathbb E[m\mid T\mathcal M]=\mathbb E[gf\mid T\mathcal M]=f\cdot\mathbb E[g\mid T\mathcal M].
  }
  Since $g$ is centered and independent of $T\mathcal M$, Proposition~\ref{martingale_difference} 
  is proved. 
 \end{proof}

 It remains to prove that the process $(n^{-1/2}S_n^{\mathrm{pl}}(m))_{n\geqslant 1}$ is not 
  tight in $\mathcal H^o_{1/2-1/p}[0,1]$. 
  
 \begin{propo}\label{below_bound_gf_l}
  Under conditions \eqref{lac3}, \eqref{lac2} and 
  \eqref{lac1}, there exists an integer $l_0$ such that for $l\geqslant l_0$ 
  \begin{equation}\label{eq:lower_bound_gf_l}
   P_l:=\mu\ens{\frac 1{n_l^{1/p}}\quad\max_{\mathclap{\substack{1\leqslant u\leqslant n_l-k_l\\ 
   1\leqslant v\leqslant k_l}}}\frac{\abs{S_{u+v}(gf_l)-S_u(gf_l)}}{v^{1/2-1/p}}\geqslant 1}
   \geqslant \frac 1{16}.
  \end{equation}
 \end{propo}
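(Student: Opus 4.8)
The plan is to bound $P_l$ from below by restricting attention to the points lying in a large central part of the Rokhlin tower and, for each such point, choosing the indices $u,v$ so that the increment $S_{u+v}(gf_l)-S_u(gf_l)$ runs exactly over a dyadic block on which $f_l$ is constant. First I would record the level structure of $f_l$: writing a point of the tower as belonging to the level $T^{-i_0}C_l$, the function $f_l$ read along the descending orbit takes on the block of levels $[2^{I_l+j},2^{I_l+j+1})$ the constant value $L_l^{-1}(n_l/2^{I_l+j})^{1/p}$, for $0\le j\le J_l-1$ (and the same value on $[0,2^{I_l})$). Let $D_l$ be the union of the levels $T^{-i_0}C_l$ with $k_l\le i_0\le n_l-k_l$. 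For $\omega\in D_l$ at level $i_0$ and each $j$, the choice $u=i_0-2^{I_l+j+1}+1$, $v=2^{I_l+j}$ is admissible (one checks $1\le u\le n_l-k_l$ and $1\le v\le k_l$), and the corresponding orbit segment stays inside the tower and covers the $j$-th block. Since $f_l$ is constant there,
\[
\frac{1}{n_l^{1/p}}\,\frac{|S_{u+v}(gf_l)-S_u(gf_l)|}{v^{1/2-1/p}}=\frac{1}{L_l}\,\frac{|G_j|}{(2^{I_l+j})^{1/2}},\qquad G_j:=\sum_{\ell=2^{I_l+j}}^{2^{I_l+j+1}-1} g\circ T^{\,i_0-\ell},
\]
a scalar multiple of a sum of $2^{I_l+j}$ of the variables $g\circ T^n$. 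Writing $A_j$ for the event $|G_j|\ge L_l(2^{I_l+j})^{1/2}$ (on which the above ratio is $\ge1$), we get $P_l\ge \mu\big(D_l\cap\bigcup_j A_j\big)$.

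The probabilistic heart is the estimate of $\mu(\bigcup_j A_j)$ on $D_l$. Conditioning on $\mathcal C$ fixes the level $i_0$ (the tower is $\mathcal C$-measurable), hence fixes the pairwise disjoint index sets defining the $G_j$; since $g$ is $\mathcal B$-measurable with $(g\circ T^n)_n$ independent and $\mathcal B\perp\mathcal C$ (Lemma~\ref{lem:positive_entropy}), the events $A_j$ are, conditionally on $\mathcal C$, independent with probabilities $\mu(A_j)$ independent of the level. As $D_l\in\mathcal C$, this yields
\[
P_l\ \ge\ \mu(D_l)\,\Big(1-\prod_{j=0}^{J_l-1}\big(1-\mu(A_j)\big)\Big)\ \ge\ \mu(D_l)\,\Big(1-\exp\big(-\textstyle\sum_{j=0}^{J_l-1}\mu(A_j)\big)\Big).
\]
Each $G_j$ is a centered sum of $2^{I_l+j}\ge 2^{I_l}$ independent copies of the bounded variable $g$, of variance $2^{I_l+j}\|g\|_2^2$, so by the Berry--Esseen theorem $\mu(A_j)\ge \mu\{|\mathcal N|\ge L_l/\|g\|_2\}-C\,2^{-I_l/2}$ for a constant $C$ depending only on $g$. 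Since \eqref{lac3} forces $L_l\to\infty$, the Gaussian gap $\mu\{L_l/\|g\|_2\le|\mathcal N|<4^{1/p}L_l/\|g\|_2\}$ is comparable to $\mu\{|\mathcal N|\ge L_l/\|g\|_2\}$, hence of order $1/J_l$ by \eqref{lac3}, while the error $C2^{-I_l/2}$ is $o(1/J_l)$ by \eqref{lac2}; therefore $\mu(A_j)\ge \mu\{|\mathcal N|\ge 4^{1/p}L_l/\|g\|_2\}$ for every $j$ once $l$ is large.

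Summing, $\sum_{j=0}^{J_l-1}\mu(A_j)\ge J_l\,\mu\{|\mathcal N|\ge 4^{1/p}L_l/\|g\|_2\}\to 1$ by \eqref{lac3}, so the conditional factor above is at least $1-e^{-1}+o(1)$. Finally \eqref{lac1} (with the inductive choice $n_l\ge l^2 k_l n_{l-1}$) forces $k_l/n_l\to0$, and as the tower has measure exceeding $1/2$ we get $\mu(D_l)=(n_l-2k_l+1)\mu(C_l)>\tfrac12(1-2k_l/n_l)\to\tfrac12$. Combining, $\liminf_l P_l\ge \tfrac12(1-e^{-1})>\tfrac1{16}$, which gives the claim for all $l\ge l_0$. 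The main obstacle is the uniform quantitative central limit estimate of the previous paragraph: one must control the normal approximation simultaneously for all $J_l$ blocks, whose lengths range over $2^{I_l},\dots,2^{I_l+J_l-1}$, and verify that the margin built into \eqref{lac3} through the factor $4^{1/p}$ genuinely dominates the accumulated Berry--Esseen error, which is exactly the content of \eqref{lac2}; the conditional-independence bookkeeping against $\mathcal C$ is the other point demanding care.
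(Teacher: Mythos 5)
Your proof is correct and follows essentially the same route as the paper: restrict to the central levels of the Rokhlin tower, use the constancy of $f_l$ on dyadic blocks to reduce the increments of $S(gf_l)$ to normalized sums of the i.i.d.\ variables $g\circ T^n$, factor via the independence of $\mathcal B$ and $\mathcal C$, and conclude with Berry--Esseen together with \eqref{lac3}, \eqref{lac2} and \eqref{lac1}. The only differences are cosmetic: your blocks are aligned exactly with the constancy regions, so the $4^{1/p}$ slack built into \eqref{lac3} is spent on absorbing the Berry--Esseen error rather than on the ratio $k_{l,j-1}/(k_{l,j}-1)\leqslant 4$, and you bound $1-\prod_j(1-\mu(A_j))$ by $1-\exp\left(-\sum_j\mu(A_j)\right)$ where the paper uses the quadratic lower bound $J_lc-J_l^2c^2/2$.
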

 \begin{proof}
  Let us fix an integer $l\geqslant 1$. Assume that $\omega\in T^{-s}C_l$, where 
  $k_l\leqslant s\leqslant n_l-1$. Since $T^u\omega$ belongs to $T^{-(s-u)}C_l$ we have 
  for $s-n_l\leqslant u\leqslant s$
  \begin{equation}\label{eq:iterates_of_f_l}
   (f_l\circ T^u)(\omega)=\begin{cases}
    \frac 1{L_l}\left(\frac{n_l}{k_{l,J_l}}\right)^{1/p},&\quad \mbox{if } 
    s-k_{l,J_l}<u\leqslant s;\\
    \frac 1{L_l}\left(\frac{n_l}{k_{l,j}}\right)^{1/p},&\quad \mbox{if } 
    s-k_{l,j-1}< u\leqslant s-k_{l,j}, 
    \mbox{ and }1\leqslant j\leqslant J_l;\\
    0,&\quad\mbox{if }s-n_l \leqslant u< s-k_l. 
   \end{cases}
  \end{equation}
As a consequence, 
 \begin{multline}\label{C_l}
  T^{-s}C_l\cap \ens{\frac 1{n_l^{1/p}}\max_{1\leqslant j\leqslant J_l}
  \frac{\abs{S_{s-k_{l,j-1}+1}(gf_l)-S_{s-k_{l,j}}(gf_l)}}
  {(k_{l,j-1}-1-k_{l,j})^{1/2-1/p}}\geqslant 1}\\
  =T^{-s}C_l\cap \ens{\max_{1\leqslant j\leqslant J_l}
  \frac{\abs{S_{s-k_{l,j-1}+1}(g)-S_{s-k_{l,j}}(g)}}
  {(k_{l,j}-1)^{1/2-1/p}k_{l,j-1}^{1/p}}\geqslant L_l}.
 \end{multline}
Since for $k_l+1\leqslant s\leqslant n_l-k_l$ and $1\leqslant j\leqslant J_l$, 
we have $1\leqslant s-k_{l,j}\leqslant n_l-k_l$ and $
1\leqslant k_{l,j-1}-1-k_{l,j} \leqslant k_l$, the inequality 
\begin{multline}\label{eq:maximum_on_T^{-s}C_l}
 \chi(T^{-s}C_l)\cdot \max_{1\leqslant j\leqslant J_l}
  \frac{\abs{S_{s-k_{l,j-1}+1}(gf_l)-S_{s-k_{l,j}}(gf_l)}}
  {(k_{l,j-1}-1-k_{l,j})^{1/2-1/p}}\\
  \leqslant \chi(T^{-s}(C_l))\quad \max_{\mathclap{\substack{1\leqslant u\leqslant n_l-k_l\\ 
   1\leqslant v\leqslant k_l}}}\frac{\abs{S_{u+v}(gf_l)-S_u(gf_l)}}{v^{1/2-1/p}}
\end{multline}
takes place and since the sets $(T^{-s}C_l)_{s=0}^{n_k-1}$ are pairwise disjoint, we obtain 
the lower bound 
\begin{equation}\label{eq:below_bound_Pl}
 P_l\geqslant 
 \sum_{s=1}^{n_l-2k_l}\mu\left(T^{-(s+k_l)}(C_l)\cap\ens{\max_{1\leqslant j\leqslant J_l}
  \frac{\abs{S_{s+k_l-k_{l,j-1}+1}(gf_l)-S_{s+k_l-k_{l,j}}(gf_l)}}
  {(k_{l,j-1}-1-k_{l,j})^{1/2-1/p}}\geqslant 1} \right).
\end{equation}

Using the fact that $T$ is measure-preserving, this becomes 
\begin{equation}\label{eq:below_bound_Pl_2}
 P_l\geqslant(n_l-2k_l)\cdot \mu\left(T^{-k_l}(C_l)\cap\ens{\max_{1\leqslant j\leqslant J_l}
  \frac{\abs{S_{k_l-k_{l,j-1}-1}(gf_l)-S_{k_l-k_{l,j}}(gf_l)}}
  {(k_{l,j-1}-1-k_{l,j})^{1/2-1/p}}\geqslant 1} \right),
  \end{equation}
and plugging \eqref{C_l} in the previous estimate, we get 
\begin{equation}\label{eq:below_bound_Pl_3}
 P_l\geqslant (n_l-2k_l)\mu\left(T^{-k_l}(C_l)\cap \ens{\max_{1\leqslant j\leqslant J_l}
  \frac{\abs{S_{k_l-k_{l,j-1}-1}(g)-S_{k_l-k_{l,j}}(g)}}
  {(k_{l,j}-1)^{1/2-1/p}k_{l,j-1}^{1/p}}\geqslant L_l}\right).
\end{equation}
The sets $\ens{\max_{1\leqslant j\leqslant J_l}
  \frac{\abs{S_{k_l-k_{l,j-1}+1}(g)-S_{k_l-k_{l,j}}(g)}}
  {(k_{l,j}-1)^{1/2-1/p}k_{l,j-1}^{1/p}}\geqslant L_l}$ and $T^{-k_l}C_l$ belong 
  to the independent sub-$\sigma$-algebras $\mathcal B$ and $\mathcal C$ respectively, hence 
  using the fact that the sequences $(g\circ T^i)_{i\geqslant 0}$ and 
  $(g\circ T^{-i})_{i\geqslant 0}$ are identically distributed, we obtain
\begin{equation}\label{eq:below_bound_Pl_4}
 P_l\geqslant (n_l-2k_l)\mu\left(C_l\right)\mu\ens{\max_{1\leqslant j\leqslant J_l}
  \frac{\abs{S_{k_{l,j-1}-1}(g)-S_{k_{l,j}}(g)}}
  {(k_{l,j}-1)^{1/2-1/p}k_{l,j-1}^{1/p}}\geqslant L_l}.
\end{equation}

By construction, we have $n_l\cdot\mu(C_l)= \mu\left(\bigcup_{i=0}^{n_l-1}T^{-i}C_l\right)>1/2$, 
hence 
\begin{equation}\label{estimate_P_l}
 P_l\geqslant \frac 12\left(1-2\frac{k_l}{n_l}\right)
 \mu\ens{\max_{1\leqslant j\leqslant J_l}
  \frac{\abs{S_{k_{l,j-1}-1}(g)-S_{k_{l,j}}(g)}}
  {(k_{l,j}-1)^{1/2-1/p}k_{l,j-1}^{1/p}}\geqslant L_l}.
\end{equation}
It remains to find a lower bound for 
\begin{equation}\label{eq:definition_P'_l}
 P'_l:=\mu\ens{\max_{1\leqslant j\leqslant J_l}
  \frac{\abs{S_{k_{l,j-1}-1}(g)-S_{k_{l,j}}(g)}}
  {(k_{l,j}-1)^{1/2-1/p}k_{l,j-1}^{1/p}}\geqslant L_l}.
\end{equation}

Let us define the set
\begin{equation}\label{eq:definition_E_j}
 E_j:=\ens{
  \frac{\abs{S_{k_{l,j-1}-1}(g)-S_{k_{l,j}}(g)}}
  {(k_{l,j}-1)^{1/2-1/p}k_{l,j-1}^{1/p}}\geqslant L_l}
\end{equation}

Since the sequence $(g\circ T^i)_{i\in \Z}$ is independent, 
the family $(E_j)_{1\leqslant j\leqslant J_l}$ is independent, hence 
\begin{equation}
 P'_l\geqslant 1-\prod_{j=1}^{J_l}(1-\mu(E_j)) \label{estimate_P'_l}.
\end{equation}
We define the quantity
\begin{equation}\label{eq:definition_of_c_j}
 c_j:=\mu\ens{\abs{\mathcal N}\geqslant \frac{L_l}{\norm g_2}
 \left(\frac{k_{l,j-1}}{k_{l,j}-1}\right)^{1/p}}
\end{equation}

(we recall that $\mathcal N$ denotes a standard normally distributed random variable). 
By the Berry-Esseen theorem, we have for each $j\in \ens{1,\dots, J_l}$, 
\begin{equation}
 \abs{\mu(E_j)-c_j}\leqslant \frac 1{\norm{g}_2^3}\frac 1{(k_{l,j-1}-1)^{1/2}}\leqslant 
 \frac{\sqrt 2}{\norm{g}_2^3}2^{-I_l/2}\label{Berry_Essen}.
\end{equation}

Plugging the estimate \eqref{Berry_Essen} into \eqref{estimate_P'_l} and 
noticing that for an integer $N$ and $(a_n)_{n=1}^N$, $(b_n)_{n=1}^N$ two families of 
numbers in the unit interval,
\eq{
\abs{\prod_{n=1}^Na_n-\prod_{n=1}^Nb_n}\leqslant 
\sum_{n=1}^N\abs{a_n-b_n}, 
}
we obtain 
\begin{align}
 P'_l&\geqslant 1-\prod_{j=1}^{J_l}(1-\mu(E_j))+\prod_{j=1}^{J_l}(1-c_j)-\prod_{j=1}^{J_l}(1-c_j)\\
 &\geqslant 1-\prod_{j=1}^{J_l}(1-c_j)-\sum_{j=1}^{J_l}\abs{\mu(E_j)-c_j}\\
 &\geqslant 1-\prod_{j=1}^{J_l}(1-c_j)-J_l\frac{\sqrt 2}{\norm{g}_2^3}2^{-I_l/2}.
\end{align}
Notice that 
\eq{
 1-\prod_{j=1}^{J_l}(1-c_j)\geqslant 1-\max_{1\leqslant j\leqslant J_l}\left(
1- c_j\right)^{J_l}
}
and since $(I_l)_{\geqslant }$ is increasing and $I_1\geqslant 1$, we have 
\begin{equation}\label{eq:simplification_of_c_j}
 \frac{k_{l,j-1}}{k_{l,j}-1}=\frac 2{1-k_{l,j}^{-1}}\leqslant \frac 2{1-2^{-I_l}}
 \leqslant 4
\end{equation}
it follows by \eqref{eq:definition_of_c_j} that 
$c_j\geqslant  \mu\ens{\abs{\mathcal N}\geqslant 
4^{1/p}\frac{L_l}{\norm g_2}}$ for $1\leqslant j\leqslant J_l$. We thus have 
\begin{equation}\label{eq:lower_bound_P'_l}
 P'_l\geqslant 1-\left(1-\mu\ens{\abs{\mathcal N}\geqslant 
4^{1/p}\frac{L_l}{\norm g_2}}\right)^{J_l}-J_l\frac{\sqrt 2}{\norm{g}_2^3}2^{-I_l/2}.
\end{equation}

Using the elementary inequality 
\eq{
 1-(1-t)^n\geqslant nt-\frac{n(n-1)}2t^2
}
valid for a positive integer $n$ and $t\in [0,1]$, we obtain 
\begin{equation}\label{estimate_P'_l_bis}
 P'_l\geqslant J_l\mu\ens{\abs{\mathcal N}\geqslant 
4^{1/p}\frac{L_l}{\norm g_2}} -\frac{J_l^2}2\left(\mu\ens{\abs{\mathcal N}\geqslant 
4^{1/p}\frac{L_l}{\norm g_2}}\right)^2-J_l\frac{\sqrt 2}{\norm{g}_2^3}2^{-I_l/2}.
\end{equation}
By conditions \eqref{lac2} and \eqref{lac3}, there exists an integer $l'_0$ such that if 
$l\geqslant l'_0$, then 
\begin{equation}\label{estimate_P'_l_ter}
 \mu\ens{\max_{1\leqslant j\leqslant J_l}
  \frac{\abs{S_{k_{l,j-1}-1}(g)-S_{k_{l,j}}(g)}}
  {(k_{l,j}-1)^{1/2-1/p}k_{l,j-1}^{1/p}}\geqslant L_l}\geqslant \frac 14.
\end{equation}
Combining \eqref{estimate_P_l} with \eqref{estimate_P'_l_ter}, we obtain for $l\geqslant l'_0$
\begin{equation}\label{eq:lower_bound_Pl_6}
 P_l\geqslant \frac 18\left(1-2\frac{k_l}{n_l}\right).
\end{equation}
By condition \eqref{lac1}, we thus get that $P_l\geqslant 1/16$ for $l\geqslant l_0$, where 
$l_0\geqslant l'_0$ and $k_l/n_l\leqslant 1/4$ if $l\geqslant l_0$. 

This concludes the proof of Proposition~\ref{below_bound_gf_l}. 
 \end{proof}

\begin{propo}\label{Propo_bound_norm_Snpl}
 Under conditions \eqref{lac0}, \eqref{lac3}, \eqref{lac2}, \eqref{lac1} and \eqref{lac4}, 
 we have for $l$ large enough
 \begin{equation}\label{bound_norm_Snpl}
  \mu\ens{\frac 1{n_l^{1/p}}\quad\max_{\mathclap{\substack{1\leqslant u\leqslant n_l-k_l\\ 
   1\leqslant v\leqslant k_l}}}\frac{\abs{S_{u+v}(m)-S_u(m)}}{v^{1/2-1/p}}\geqslant \frac 12}
   \geqslant \frac 1{32}.
 \end{equation}
\end{propo}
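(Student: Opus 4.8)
The plan is to compare the Birkhoff increments of $m=g\sum_{l'\geqslant 1}f_{l'}$ with those of the single term $gf_l$, for which Proposition~\ref{below_bound_gf_l} already supplies a lower bound of probability at least $1/16$. Writing for short
\[
M_l(h):=\frac{1}{n_l^{1/p}}\max_{\substack{1\leqslant u\leqslant n_l-k_l\\ 1\leqslant v\leqslant k_l}}\frac{\abs{S_{u+v}(h)-S_u(h)}}{v^{1/2-1/p}},
\]
one observes that $M_l$ is subadditive in $h$, since the increments $S_{u+v}(h)-S_u(h)$ are linear in $h$ and the maximum of a sum of nonnegative ratios is at most the sum of the maxima. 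Splitting $m=gf_l+g\sum_{l'<l}f_{l'}+g\sum_{l'>l}f_{l'}$, I would therefore start from
\[
M_l(m)\geqslant M_l(gf_l)-\sum_{l'<l}M_l(gf_{l'})-M_l\Big(g\sum_{l'>l}f_{l'}\Big),
\]
and control the two error terms, the ``lower'' sum deterministically and the ``upper'' sum on a set of large measure.

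For the lower terms I would use only the sup norm $\norm{f_{l'}}_\infty=L_{l'}^{-1}(n_{l'}/2^{I_{l'}})^{1/p}$ together with $\abs g\leqslant 1$. Crudely $\abs{S_{u+v}(gf_{l'})-S_u(gf_{l'})}\leqslant v\norm{f_{l'}}_\infty$, so the ratio defining $M_l$ is at most $v^{1/2+1/p}\norm{f_{l'}}_\infty$. The key elementary point is that, because $p>2$, the exponent $1/2+1/p$ is strictly less than $1$, whence $v^{1/2+1/p}\leqslant k_l^{1/2+1/p}\leqslant k_l$ for every $v\leqslant k_l$; thus $M_l(gf_{l'})\leqslant k_l n_l^{-1/p}\norm{f_{l'}}_\infty$. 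Summing over $l'<l$ and invoking condition \eqref{lac4} gives exactly $\sum_{l'<l}M_l(gf_{l'})\leqslant \tfrac12$, and this bound is deterministic.

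For the upper terms I would exploit that each $f_{l'}$ is supported on $S_{l'}:=\bigcup_{a=0}^{k_{l'}-1}T^{-a}C_{l'}$, a set of measure at most $k_{l'}/n_{l'}$, because the Rokhlin tower levels are disjoint and $\mu(C_{l'})\leqslant 1/n_{l'}$. Since $u\geqslant 1$ and $u+v\leqslant n_l$, all increments entering $M_l$ involve only the iterates $T^i$ with $0\leqslant i<n_l$; hence on the event
\[
B_l:=\ens{T^i\omega\notin\textstyle\bigcup_{l'>l}S_{l'}\text{ for all }0\leqslant i<n_l}
\]
the whole tail $g\sum_{l'>l}f_{l'}$ contributes nothing to these increments, so $M_l(g\sum_{l'>l}f_{l'})=0$ there. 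A union bound together with the invariance of $\mu$ yields $\mu(B_l^c)\leqslant n_l\sum_{l'>l}\mu(S_{l'})\leqslant n_l\sum_{l'>l}k_{l'}/n_{l'}$, which tends to $0$ by condition \eqref{lac1}; in particular $\mu(B_l^c)\leqslant 1/32$ for $l$ large.

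Finally, on the intersection of $B_l$ with the event $A_l:=\ens{M_l(gf_l)\geqslant 1}$ furnished by Proposition~\ref{below_bound_gf_l}, the subadditivity inequality above gives $M_l(m)\geqslant 1-\tfrac12-0=\tfrac12$. Consequently $\mu\ens{M_l(m)\geqslant 1/2}\geqslant \mu(A_l)-\mu(B_l^c)\geqslant \tfrac1{16}-\tfrac1{32}=\tfrac1{32}$ for $l$ large enough, which is precisely \eqref{bound_norm_Snpl}. I expect the only genuinely delicate points to be the bookkeeping of the support structure of the $f_{l'}$ (to compute $\mu(S_{l'})$ and to see that only the iterates $T^i$ with $i<n_l$ matter) and the verification that the exponent inequality $1/2+1/p\leqslant 1$ is exactly what turns \eqref{lac4} into the threshold $1/2$; this is where the hypothesis $p>2$ and the construction of the sequences are used.
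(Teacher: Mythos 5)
Your proof is correct and follows essentially the same route as the paper: the same three-way split of $m$ into $gf_l$, the lower tail (controlled deterministically via the sup norm of the $f_{l'}$ and condition \eqref{lac4}), and the upper tail (killed outside an exceptional set of measure at most $n_l\sum_{l'>l}k_{l'}/n_{l'}$ via \eqref{lac1}), combined with Proposition~\ref{below_bound_gf_l} through subadditivity of the maximal functional. The only cosmetic difference is that you keep the factor $v^{1/2+1/p}$ where the paper simply drops the denominator $v^{1/2-1/p}\geqslant 1$; both yield the same bound $k_l n_l^{-1/p}\norm{f_{l'}}_\infty$.
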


Since the H\"older modulus of continuity of a piecewise linear function is reached at 
vertices, we derive the following corollary.

\begin{cor}\label{corollary_non_tightness}
If $l\geqslant l_0$, then 
\begin{equation}
 \mu\ens{\omega_{1/2-1/p}\left(\frac 1{\sqrt{n_l}}S_{n_l}^{\mathrm{pl}}(m), \frac{k_l}{n_l}\right)
 \geqslant \frac 12}\geqslant \frac 1{32}.
\end{equation}
Therefore, for each positive $\delta$, we have 
\begin{equation}
 \limsup_{n\to \infty}\mu\ens{\omega_{1/2-1/p}\left(
 \frac 1{\sqrt{n}}S_{n}^{\mathrm{pl}}(m),\delta\right)
 \geqslant \frac 12}\geqslant \frac 1{32},
\end{equation}
and the process $(n^{-1/2}S_n^{\mathrm{pl}}(m))_{n\geqslant 1}$ is not 
  tight in $\mathcal H^o_{1/2-1/p}[0,1]$.
\end{cor}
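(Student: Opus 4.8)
The plan is to read the discrete maximum of Proposition~\ref{Propo_bound_norm_Snpl} as a H\"older difference quotient of the polygonal line, and then to show that the resulting non-vanishing of the modulus of continuity is incompatible with tightness. Writing $x:=n_l^{-1/2}S_{n_l}^{\mathrm{pl}}(m)$, the vertices of $x$ are the points $\pa{i/n_l,\,n_l^{-1/2}S_i(m)}$ for $0\leqslant i\leqslant n_l$, so for $s=u/n_l$ and $t=(u+v)/n_l$ one has $x(t)-x(s)=n_l^{-1/2}\pa{S_{u+v}(m)-S_u(m)}$ and $\abs{t-s}=v/n_l$, whence
\begin{equation}
\frac{\abs{x(t)-x(s)}}{\abs{t-s}^{1/2-1/p}}
=n_l^{-1/p}\,\frac{\abs{S_{u+v}(m)-S_u(m)}}{v^{1/2-1/p}}.
\end{equation}
This is exactly the expression maximised in \eqref{bound_norm_Snpl}. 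Because the H\"older modulus of a piecewise linear function is controlled by its vertices (Lemma~A.2 in \cite{MR3020943}), $\omega_{1/2-1/p}(x,\delta)$ dominates every such vertex quotient for which $\abs{t-s}<\delta$. As its proof via \eqref{C_l} and Proposition~\ref{below_bound_gf_l} shows, the event underlying \eqref{bound_norm_Snpl} is realised by gaps $v=k_{l,j-1}-1-k_{l,j}=k_{l,j}-1\leqslant k_l/2-1<k_l$, i.e. by vertex pairs at distance $v/n_l<k_l/n_l$. Taking $\delta=k_l/n_l$, that event is therefore contained in $\ens{\omega_{1/2-1/p}(x,k_l/n_l)\geqslant 1/2}$, and Proposition~\ref{Propo_bound_norm_Snpl} yields the first displayed inequality.

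Next I would pass to an arbitrary fixed $\delta>0$. The inductive construction gives $n_l\geqslant l^2k_ln_{l-1}\geqslant l^2k_l$, so $k_l/n_l\to 0$ and in particular $k_l/n_l<\delta$ for all large $l$. Since $\delta\mapsto\omega_{1/2-1/p}(x,\delta)$ is non-decreasing, for such $l$
\begin{equation}
\mu\ens{\omega_{1/2-1/p}\pa{n_l^{-1/2}S_{n_l}^{\mathrm{pl}}(m),\delta}\geqslant\tfrac 12}
\geqslant
\mu\ens{\omega_{1/2-1/p}\pa{n_l^{-1/2}S_{n_l}^{\mathrm{pl}}(m),\tfrac{k_l}{n_l}}\geqslant\tfrac 12}
\geqslant\frac 1{32}.
\end{equation}
Letting $n$ run through the subsequence $(n_l)_{l\geqslant 1}$ gives the claimed $\limsup$ bound for every $\delta>0$.

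Finally I would contradict tightness. If $(n^{-1/2}S_n^{\mathrm{pl}}(m))_{n\geqslant 1}$ were tight in $\mathcal H_{1/2-1/p}^o[0,1]$, then, fixing $\eta\in(0,1/32)$, there would be a compact set $K\subset\mathcal H_{1/2-1/p}^o[0,1]$ with $\mu\ens{n^{-1/2}S_n^{\mathrm{pl}}(m)\notin K}\leqslant\eta$ for all $n$. Since each element of the little H\"older space has modulus tending to $0$ and $K$ is compact, this holds uniformly over $K$, i.e. $\lim_{\delta\to 0}\sup_{x\in K}\omega_{1/2-1/p}(x,\delta)=0$; choose $\delta_0>0$ with $\sup_{x\in K}\omega_{1/2-1/p}(x,\delta_0)<1/2$. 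Then $\ens{\omega_{1/2-1/p}(n^{-1/2}S_n^{\mathrm{pl}}(m),\delta_0)\geqslant 1/2}\subset\ens{n^{-1/2}S_n^{\mathrm{pl}}(m)\notin K}$, so $\limsup_{n\to\infty}\mu\ens{\omega_{1/2-1/p}(n^{-1/2}S_n^{\mathrm{pl}}(m),\delta_0)\geqslant 1/2}\leqslant\eta<1/32$, contradicting the previous paragraph; hence the sequence is not tight. I expect the only delicate points to be the bookkeeping guaranteeing that the maximising gaps stay strictly below $k_l$ (so the relevant vertex pairs fall inside the open window of width $k_l/n_l$), and the appeal to the Arzel\`a--Ascoli-type description of compact sets in $\mathcal H_{1/2-1/p}^o[0,1]$.
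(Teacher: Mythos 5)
Your proposal is correct and follows essentially the same route as the paper, which derives the corollary directly from Proposition~\ref{Propo_bound_norm_Snpl} via the remark that the H\"older modulus of a polygonal line is attained at vertices, together with $k_l/n_l\to 0$ and the standard characterization of tightness in $\mathcal H^o_{1/2-1/p}[0,1]$. Your extra care about the strict inequality $\abs{t-s}<k_l/n_l$ in the definition of $\omega_{1/2-1/p}$ (checking that the exhibited gaps satisfy $v\leqslant k_l/2-1<k_l$) is a legitimate detail the paper glosses over.
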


\begin{proof}[Proof of Proposition~\ref{Propo_bound_norm_Snpl}]

Let $l_0$ be the integer given by Proposition~\ref{below_bound_gf_l} and let $l\geqslant l_0$.
We define $m'_l:=\sum_{i=1}^{l-1}gf_i$ and $m''_l:=\sum_{i=l+1}^{+\infty}gf_i$.

We define for $i\geqslant 1$,
 \begin{equation}\label{eq:definition_Mli}
 M_{l,i}:= \frac 1{n_l^{1/p}}\quad\max_{\mathclap{\substack{1\leqslant u\leqslant n_l-k_l\\ 
  1\leqslant  v\leqslant k_l}}}\frac{\abs{S_{u+v}(gf_i)-S_u(gf_i)}}{v^{1/2-1/p}}.
 \end{equation}
 Let $i$ be an integer such that $i<l$.
Notice that for $1\leqslant u\leqslant n_l-k_l$ and $ v\leqslant k_l$, we have 
\eq{
 \abs{S_{u+v}(gf_i)-S_u(gf_i)}=U^u(\abs{S_{v}(gf_i)}),
}
where $U(h)(\omega)=h\left(T\left(\omega\right)\right)$ and since 
\eq{
 \abs{S_{v}(gf_i)}\leqslant v\norm{gf_i}_{\infty}\leqslant \frac{k_l}{L_i}
 \left(\frac{n_i}{2^{I_i}}\right)^{1/p},
}
the estimate 
\begin{equation}\label{estimate_M_l,i}
 M_{l,i}\leqslant \frac{k_l}{L_in_l^{1/p}}\left(\frac{n_i}{2^{I_i}}\right)^{1/p}
\end{equation}
holds. Since 
\begin{equation}\label{eq:Mli_i_leq_l-1}
 \frac 1{n_l^{1/p}}\quad\max_{\mathclap{\substack{1\leqslant u\leqslant n_l-k_l\\ 
 1\leqslant  v\leqslant k_l}}}\frac{\abs{S_{u+v}(m'_l)-S_u(m'_l)}}{v^{1/2-1/p}}\leqslant 
   \sum_{i=1}^{l-1}M_{l,i},
\end{equation}
we have by \eqref{estimate_M_l,i}, 
\begin{equation}\label{eq:Mli_i_leq_l-1_bis}
 \frac 1{n_l^{1/p}}\quad\max_{\mathclap{\substack{1\leqslant u\leqslant n_l-k_l\\ 
  1\leqslant v\leqslant k_l}}}\frac{\abs{S_{u+v}(m'_l)-S_u(m'_l)}}{v^{1/2-1/p}}\leqslant
  \sum_{i=1}^{l-1} \frac{k_l}{L_in_l^{1/p}}\left(\frac{n_i}{2^{I_i}}\right)^{1/p}.
\end{equation}
By \eqref{lac4}, the following bound takes place: 
\begin{equation}\label{borne_M_li,i<l}
 \frac 1{n_l^{1/p}}\quad\max_{\mathclap{\substack{1\leqslant u\leqslant n_l-k_l\\ 
  1\leqslant v\leqslant k_l}}}\frac{\abs{S_{u+v}(m'_l)-S_u(m'_l)}}{v^{1/2-1/p}}\leqslant\frac 12.
\end{equation}

The following set inclusions hold
\begin{align}\label{eq:bound_m''_l}
 \ens{\frac 1{n_l^{1/p}}\quad\max_{\mathclap{\substack{1\leqslant u\leqslant n_l-k_l\\ 
 1\leqslant  v\leqslant k_l}}}\frac{\abs{S_{u+v}(m''_l)-S_u(m''_l)}}{v^{1/2-1/p}}\neq 0}
   &\subset\bigcup_{i>l}
   \ens{M_{l,i}\neq 0}\\
   & \subset\bigcup_{i>l}\bigcup_{u=1}^{n_l}\ens{U^u(gf_i)\neq 0}.
\end{align}
We thus have 
\begin{align}\label{eq:bound_m''_l_bis}
 \mu\ens{\frac 1{n_l^{1/p}}\quad\max_{\mathclap{\substack{1\leqslant u\leqslant n_l-k_l\\ 
 1\leqslant  v\leqslant k_l}}}\frac{\abs{S_{u+v}(m''_l)-S_u(m''_l)}}{v^{1/2-1/p}}\neq 0}
   &\leqslant \sum_{i>l}n_l\cdot\mu\ens{gf_i\neq 0}\\
   &\leqslant n_l\sum_{i>l}\mu\ens{f_i\neq 0}\\
   &=n_l\sum_{i>l}(k_i+1)\mu(C_i)\\
   &\leqslant 2n_l\sum_{i>l}\frac{k_i}{n_i}.
\end{align}
and by \eqref{lac1}, it follows that 
\begin{equation}\label{borne_M_li,i>l}
 \mu\ens{\frac 1{n_l^{1/p}}\quad\max_{\mathclap{\substack{1\leqslant u\leqslant n_l-k_l\\ 
 1\leqslant  v\leqslant k_l}}}\frac{\abs{S_{u+v}(m''_l)-S_u(m''_l)}}{v^{1/2-1/p}}\neq 0}
   \leqslant \frac 1{32}
\end{equation}
Accounting \eqref{borne_M_li,i<l}, we thus have 
\begin{multline}\label{eq:final_bound_m}
 \mu\ens{\frac 1{n_l^{1/p}}\quad\max_{\mathclap{\substack{1\leqslant u\leqslant n_l-k_l\\ 
  1\leqslant v\leqslant k_l}}}\frac{\abs{S_{u+v}(m)-S_u(m)}}{v^{1/2-1/p}}\geqslant \frac 12}\\
   \geqslant \mu\ens{\frac 1{n_l^{1/p}}\quad\max_{\mathclap{\substack{1\leqslant u\leqslant n_l-k_l\\ 
  1\leqslant v\leqslant k_l}}}\frac{\abs{S_{u+v}(gf_l+m''_l)-S_u(gf_l+m''_l)}}{v^{1/2-1/p}}\geqslant 1}\\
   \geqslant\mu\ens{\frac 1{n_l^{1/p}}\quad\max_{\mathclap{\substack{1\leqslant u\leqslant n_l-k_l\\ 
  1\leqslant v\leqslant k_l}}}\frac{\abs{S_{u+v}(gf_l)-S_u(gf_l)}}{v^{1/2-1/p}}\geqslant 1} \\
   -\mu\ens{\frac 1{n_l^{1/p}}\quad\max_{\mathclap{\substack{1\leqslant u\leqslant n_l-k_l\\ 
  1\leqslant v\leqslant k_l}}}\frac{\abs{S_{u+v}(m''_l)-S_u(m''_l)}}{v^{1/2-1/p}}\neq 0},
\end{multline}
hence combining Proposition~\ref{below_bound_gf_l} with \eqref{borne_M_li,i>l}, we obtain the 
conclusion of Proposition~\ref{Propo_bound_norm_Snpl}.
\end{proof}
 
Theorem~\ref{counter_example} follows from Corollary~\ref{corollary_non_tightness} and 
Propositions~\ref{decay_tail} and \ref{martingale_difference}.

 \subsection{Proof of Theorem~\ref{sufficient_cond_mart} and Proposition~\ref{generalized_Doob}}
 
 \begin{proof}[Proof of Proposition~\ref{generalized_Doob}]

 Let us fix a positive $t$.
 Recall the equivalence between $\norm{x}_{\alpha}$ and 
 $\norm{x}_{\alpha}^{\mathrm{seq}}$
 and Notation~\ref{first_notation}. By Remark~\ref{rmq:simplification_sequential_norm}, 
 we have to show that for some 
 constant $C$ depending only on $p$ and 
 each integer $n\geqslant 1$, 
 \begin{multline}\label{goal_inequality}
 P(n,t):= t^p\mu\ens{\sup_{j\geqslant 1} 2^{\alpha j}n^{-1/2}
  \max_{0\leqslant k<2^j}\abs{S_n^{\mathrm{pl}}(m,r_{k+1,j})-S_n^{\mathrm{pl}}(m,r_{k,j})}
  >t}\leqslant \\
  \leqslant C\left(\norm m_{p,\infty}^p+\mathbb E\left(\mathbb E[m^2\mid T\mathcal M]
  \right)^{p/2}\right)
 \end{multline}

 In the proof, we shall denote by $C_p$ a constant depending only on $p$ which may change from 
 line to line. 
 
 We define 
 \eq{
 P_1(n,t):= \mu\ens{\sup_{1\leqslant j\leqslant \log n} 2^{\alpha j}n^{-1/2}
  \max_{0\leqslant k<2^j}\abs{S_n^{\mathrm{pl}}(m,r_{k+1,j})-S_n^{\mathrm{pl}}(m,r_{k,j})}
  >t}, \mbox{ and }
 }
  \eq{P_2(n,t):= \mu\ens{\sup_{j> \log n} 2^{\alpha j}n^{-1/2}
  \max_{0\leqslant k<2^j}\abs{S_n^{\mathrm{pl}}(m,r_{k+1,j})-S_n^{\mathrm{pl}}(m,r_{k,j})}
  >t},
  }
  hence 
  \begin{equation}
   P(n,t)\leqslant t^pP_1(n,t/2)+t^pP_2(n,t/2).
  \end{equation}

  We estimate $P_2(n,t)$. For $j>\log n$, we have the inequality 
  \begin{equation}
    r_{k+1,j}-r_{k,j}=(k+1)2^{-j}-k2^{-j}=2^{-j}<1/n,
  \end{equation}
  hence if $r_{k,j}$ belongs to the interval $[l/n,(l+1)/n)$ for some $l\in 
  \ens{0,\dots,n-1}$, then 
  \begin{itemize}
   \item either $r_{k+1,j}\in [l/n,(l+1)/n)$, and in this case, 
   \eq{
    \abs{S_n^{\mathrm{pl}}(m,r_{k+1,j})-S_n^{\mathrm{pl}}(m,r_{k,j})}
    =\abs{m\circ T^{l+1}}2^{-j}n\leqslant 2^{-j}n\max_{1\leqslant l\leqslant n}
    \abs{U^l(m)};
   }
   \item or $r_{k+1,j}$ belongs to the interval $[(l+1)/n,(l+2)/n)$. The estimates 
   \begin{multline}
    \abs{S_n^{\mathrm{pl}}(m,r_{k+1,j})-S_n^{\mathrm{pl}}(m,r_{k,j})}\leqslant 
    \abs{S_n^{\mathrm{pl}}(m,r_{k+1,j})-S_n^{\mathrm{pl}}(m,(l+1)/n)}+\\
    +\abs{S_n^{\mathrm{pl}}(m,(l+1)/n)-S_n^{\mathrm{pl}}(m,r_{k,j})}
    \leqslant 2^{1-j}n\max_{1\leqslant l\leqslant n}
    \abs{U^l(m)}
   \end{multline}
  hold.
  \end{itemize}
 Considering these two cases, we obtain 
 \begin{align}
  P_2(n,t)&\leqslant \mu\ens{\sup_{j>\log n}2^{\alpha j}n2^{1-j}
  n^{-1/2}\max_{1\leqslant l\leqslant n}\abs{U^l(m)}>t}\\
  &\leqslant\mu\ens{2n^{\alpha-1/2}\max_{1\leqslant l\leqslant n}
    \abs{U^l(m)}>t}\\
  &\leqslant n\mu\ens{2n^{-1/p}\abs m>t}\\
  &\leqslant \frac{2^p}{t^p}\sup_{x> 0}x^p\mu\ens{\abs m>x}.
 \end{align}
 Therefore, establishing inequality \eqref{goal_inequality} reduces to find a 
 constant $C$ depending only on $p$ such that 
 \begin{equation}\label{goal_inequality2}
  \sup_n\sup_tt^pP_1(n,t)\leqslant C\left(\norm m_{p,\infty}^p+\mathbb E\left(
  \mathbb E[m^2\mid T\mathcal M]\right)^{p/2}\right)
 \end{equation}
 We define $u_{k,j}:=[nr_{k,j}]$ for $k<2^j$ and $j\geqslant 1$ (see Notation~\ref{first_notation}). 
 
 Notice that the inequalities 
 \eq{
              \abs{S_{u_{k,j}}(m)-
 S_n^{\mathrm{pl}}(m,r_{k,j})}\leqslant \abs{U^{u_{k,j}+1}(m)}\quad\mbox{and}
 }
 \eq{
  \abs{S_n^{\mathrm{pl}}(m,r_{k+1,j})-
 S_{u_{k+1,j}}(m)}\leqslant \abs{U^{u_{k+1,j}+1}(m)}
 }
 take place because if $j \leqslant \log n$, then 
 \eq{
  u_{k,j}\leqslant nr_{k,j}\leqslant u_{k,j}+1\leqslant u_{k+1,j}\leqslant 
  nr_{k+1,j}\leqslant u_{k+1,j}+1.
 }

 Therefore,  
 $P_1(n,t)\leqslant P_{1,1}(n,t)+P_{1,2}(n,t)$, where 
 \begin{align}
  P_{1,1}(n,t)&:=\mu\ens{\max_{1\leqslant j\leqslant \log n}
  2^{\alpha j}n^{-1/2}\max_{0\leqslant k<2^j}\abs{S_{u_{k+1,j}}(m)
  -S_{u_{k,j}}(m)}>t/2},\\
  P_{1,2}(n,t)&:=\mu\ens{\max_{1\leqslant j\leqslant \log n}
  2^{\alpha j}n^{-1/2}\max_{1\leqslant l\leqslant n}\abs{U^l(m)}>t/4}.
 \end{align}
 Notice that 
 \begin{align}
  P_{1,2}(n,t)&\leqslant \mu\ens{n^{\alpha-1/2}\max_{1\leqslant l\leqslant n}\abs{U^l(m)}>
  t/4}\\
  &\leqslant n\mu\ens{\abs m>n^{1/p}t/4}\\
  &\leqslant 4^pt^{-p}\sup_{x\geqslant 0}x^p\mu\ens{\abs m>x},
 \end{align}
 hence \eqref{goal_inequality2} will follow from the existence of a constant 
 $C$ depending only on $p$ such that 
 \begin{equation}
  \sup_n\sup_tt^pP_{1,1}(n,t)\leqslant C\left(\norm m_{p,\infty}^p+
  \mathbb E\left(\mathbb E[m^2\mid T\mathcal M]\right)^{p/2}\right).
 \end{equation}
 We estimate $P_{1,1}(n,t)$ in the following way:
 \begin{equation}\label{first_bound_P_{1,1}}
  P_{1,1}(n,t)\leqslant \sum_{j=1}^{\log n}
  2^j\max_{0\leqslant k<2^j}\mu\ens{\abs{S_{u_{k+1,j}}(m)
  -S_{u_{k,j}}(m)}>tn^{1/2}2^{-1-\alpha j}}
 \end{equation}
 We define for $1\leqslant j\leqslant \log n$ and $0\leqslant k<2^j$ the quantity 
 \begin{equation}
  \label{second_bound_P_{1,1}}
  P(n,j,k,t):=\mu\ens{\abs{S_{u_{k+1,j}}(m)
  -S_{u_{k,j}}(m)}>tn^{1/2}2^{-1-\alpha j}}.
 \end{equation}

 If $(f\circ T^j)_{j\geqslant 0}$ is a strictly stationary sequence, we define  
 \eq{
  Q_{f,n}(u):=\mu\ens{\max_{1\leqslant j\leqslant n}\abs{f\circ T^j}>u}+
  \mu\ens{\left(\sum_{i=1}^nU^i\mathbb E[f^2\mid T\mathcal M]\right)^{1/2}>u}.
 }
 The following inequality is Theorem~1 of \cite{MR2021875}. It allows us to express the 
 tail function of a martingale by that of the increments and the quadratic variance.
 \begin{theo}
  Let $m$ be an $\mathcal M$-measurable function such that $\mathbb E[m\mid T\mathcal M]=0$. 
  Then for each positive $y$ and each integer $n$, 
  \begin{equation}\label{nag}
   \mu\ens{\abs{S_n(m)}>y}\leqslant c(q,\eta)\int_0^1Q_{m,n}(\varepsilon_qu\cdot y)u^{q-1}\mathrm du,
  \end{equation}
  where $q>0$, $\eta>0$, $\varepsilon_q:=\eta/q$ and $c(q,\eta)
 :=q\exp(3\eta e^{\eta+1}-\eta-1)/\eta$. 
 \end{theo}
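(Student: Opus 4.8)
The statement is a Fuk--Nagaev type tail inequality for the martingale $S_n(m)=\sum_{i=0}^{n-1}m\circ T^i$, and the plan is to prove it by a truncation argument coupled with an exponential supermartingale, then to integrate out the truncation level to recover the integral form. Throughout I write $d_i:=m\circ T^{i-1}$ and $\mathcal F_i:=T^{-i}\mathcal M$, so that $(S_k(m),\mathcal F_k)_k$ is a martingale with $\mathbb E[d_i\mid\mathcal F_{i-1}]=0$.

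First I would fix a truncation level $c>0$ and split each increment as $d_i=d_i'+d_i''$, where $d_i'$ is the centred truncation of $d_i$ at height $c$ and $d_i''$ collects the large part. The event that the truncation ever acts is contained in $\ens{\max_{1\leqslant j\leqslant n}\abs{m\circ T^j}>c}$, whose probability is exactly the first term of $Q_{m,n}(c)$; off this event $S_n(m)$ agrees with the truncated martingale $\widetilde S_n:=\sum_{i=1}^n d_i'$. For the bounded differences one has the elementary conditional bound $\mathbb E[e^{\lambda d_i'}\mid\mathcal F_{i-1}]\leqslant \exp\!\bigl(\phi(\lambda c)\,c^{-2}\mathbb E[(d_i')^2\mid\mathcal F_{i-1}]\bigr)$ with $\phi(x)=e^x-1-x$, so that $\exp\!\bigl(\lambda\widetilde S_k-\phi(\lambda c)c^{-2}V_k\bigr)$ is a supermartingale, where $V_k:=\sum_{i\leqslant k}\mathbb E[(d_i')^2\mid\mathcal F_{i-1}]$ is controlled by the conditional quadratic variation $\sum_{i=1}^k U^i\mathbb E[m^2\mid T\mathcal M]$.

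Next comes a Chernoff bound on the good event. Introducing a variance cutoff $v>0$, optional stopping and Markov's inequality give $\mu\ens{\widetilde S_n>y,\ V_n\leqslant v}\leqslant \exp\!\bigl(-\lambda y+\phi(\lambda c)c^{-2}v\bigr)$, while the complementary event $\ens{V_n>v}$ is controlled by the second term of $Q_{m,n}(\sqrt v)$. Assembling the three pieces yields, for every admissible choice of $c$, $v$ and $\lambda$, a bound of the form $\mu\ens{\abs{S_n(m)}>y}\leqslant 2\,Q_{m,n}(c)+Q_{m,n}(\sqrt v)+2\exp\!\bigl(-\lambda y+\phi(\lambda c)c^{-2}v\bigr)$.

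The last step is to convert this three-parameter family of bounds into the single integral. I would tie the free parameters to one variable $u\in(0,1)$ by setting $c=\varepsilon_q u\,y$, taking $\sqrt v$ of the same order $\varepsilon_q u y$, and fixing $\lambda$ so that $\lambda c=\eta$; since $\varepsilon_q=\eta/q$ this forces the clean relation $\lambda y=q/u$, so that the exponential factor becomes $\exp(-q/u)$ up to the variance term, and $\int_0^1\exp(-q/u)u^{q-1}\,\mathrm{d}u$ is finite. Both tail contributions are then evaluated at the common scale $\varepsilon_q u y$, reproducing the integrand $Q_{m,n}(\varepsilon_q u y)$, and multiplying by the weight $u^{q-1}$ and integrating over $(0,1)$ turns the exponential term into the prefactor. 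The delicate point---and the main obstacle---is precisely this calibration of constants: one must use $\phi(x)=e^x-1-x$, the bound $\phi(\eta)=e^\eta-1-\eta$, and the proportionality constant in $v$ in exactly the right way so that the exponential contribution integrates against $u^{q-1}\,\mathrm{d}u$ to something bounded by $q\exp(3\eta e^{\eta+1}-\eta-1)/\eta$. Tracking these constants on the nose, rather than establishing the qualitative mechanism, is where essentially all of the work lies.
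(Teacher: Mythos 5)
First, a point of reference: the paper does not prove this statement at all. It is quoted verbatim as Theorem~1 of Nagaev \cite{MR2021875}, so there is no internal argument to match yours against; any proof you give has to stand on its own. Your sketch correctly identifies the engine behind such Fuk--Nagaev bounds: truncation of the increments, the exponential supermartingale built from $\mathbb E[e^{\lambda d_i'}\mid\mathcal F_{i-1}]\leqslant\exp\bigl(\phi(\lambda c)c^{-2}\mathbb E[(d_i')^2\mid\mathcal F_{i-1}]\bigr)$, a variance cutoff, and a Chernoff bound, with the calibration $c=\varepsilon_q uy$, $\lambda c=\eta$, $\lambda y=q/u$. This is indeed the right mechanism.

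The final step, however, contains a genuine gap, and it is structural rather than a matter of tracking constants. For each fixed $u$ your assembled bound reads
\[
\mu\ens{\abs{S_n(m)}>y}\leqslant 3\,Q_{m,n}(\varepsilon_q uy)+C(\eta)\,e^{-q/u},
\]
and multiplying by $qu^{q-1}$ and integrating over $(0,1)$ leaves, besides $3q\int_0^1Q_{m,n}(\varepsilon_q uy)u^{q-1}\,\mathrm du$, the additive term $C(\eta)\,q\int_0^1e^{-q/u}u^{q-1}\,\mathrm du\leqslant C(\eta)e^{-q}$ --- a strictly positive constant independent of $y$. Since $\int_0^1Q_{m,n}(\varepsilon_q uy)u^{q-1}\,\mathrm du\to0$ as $y\to\infty$ by dominated convergence, no constant $c(q,\eta)$ can absorb this remainder, so the stated inequality does not follow from the sketch as written. (Optimizing over $u$ instead of integrating would require a separate comparison between $\inf_u$ of your bound and the integral, which you do not supply.) The repair is to make the truncation level random: set $R:=\max\bigl(\max_{1\leqslant j\leqslant n}\abs{m\circ T^j},\ (\sum_{i=1}^nU^i\mathbb E[m^2\mid T\mathcal M])^{1/2}\bigr)$, observe that $q\int_0^1Q_{m,n}(\varepsilon_q uy)u^{q-1}\,\mathrm du\geqslant\mathbb E\bigl[\min\bigl(R/(\varepsilon_q y),1\bigr)^q\bigr]$, and bound $\mu\ens{\abs{S_n(m)}>y,\ R\leqslant\varepsilon_q uy}$ by the exponential inequality at scale $u$ via a stopping time, summing over dyadic levels of $R$ so that each exponential factor $e^{-q/u}$ is weighted by a tail probability of $R$ rather than standing alone. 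A smaller but real issue: after conditional centering, $S_n(m)$ does \emph{not} coincide with $\widetilde S_n$ off the event $\ens{\max_j\abs{m\circ T^j}>c}$; the drift $\sum_i\abs{\mathbb E[d_i\chi\ens{\abs{d_i}>c}\mid\mathcal F_{i-1}]}\leqslant c^{-1}V_n$ must be controlled on the good event as well.
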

 We shall use \eqref{nag} with $q:=p+1$, $\eta=1$ and $y:=n^{1/2}2^{-1-\alpha j}t$
 in order to estimate $P(n,j,k,t)$:
 \begin{multline}\label{bound_P(n,j,k,t)}
  P(n,j,k,t)\leqslant C_p\int_0^1\mu\ens{\max_{1\leqslant i\leqslant u_{k+1,j}-u_{k,j}}\abs{U^i(m)}
 >n^{1/2}2^{-1-\alpha j}t u\varepsilon_{p+1}}u^p\mathrm du\\
 +C_p\int_0^1\mu\ens{\left(\sum_{i=u_{k,j}+1}^{u_{k+1,j}}U^i(\mathbb E[m^2\mid T\mathcal M]
 )\right)^{1/2}> n^{1/2}2^{-1-\alpha j}ut\varepsilon_{p+1}}u^p\mathrm du.
 \end{multline}
 Exploiting the inequality $u_{k+1,j}-u_{k,j}\leqslant 2n2^{-j}$, we get from the previous bound 
 \begin{multline}
 P(n,j,k,t)\leqslant C_p\int_0^1\mu\ens{\max_{1\leqslant i\leqslant 2n2^{-j}}\abs{U^i(m)}
 >n^{1/2}2^{-1-\alpha j}t u\varepsilon_{p+1}}u^p\mathrm du\\
 +C_p\int_0^1\mu\ens{\left(\sum_{i=1}^{2n2^{-j}}U^i(\mathbb E[m^2\mid T\mathcal M]
 )\right)^{1/2}> n^{1/2}2^{-1-\alpha j}tu\varepsilon_{p+1}}u^p\mathrm du.
\end{multline}
We define for $j\leqslant \log n$, $t\geqslant 0$ and $u\in (0,1)$, 
\eq{
 P'(n,j,t,u):=\mu\ens{\max_{1\leqslant i\leqslant 2n2^{-j}}\abs{U^i(m)}
 >n^{1/2}2^{-1-\alpha j}t u\varepsilon_{p+1}},\quad\mbox{ and }
}
\eq{
 P''(n,j,t,u):=\mu\ens{\left(\sum_{i=1}^{2n2^{-j}}U^i(\mathbb E[m^2\mid T\mathcal M]
 )\right)^{1/2}> n^{1/2}2^{-1-\alpha j}tu\varepsilon_{p+1}}.
}
Using the fact that the random variables $U^i(m), 1\leqslant i\leqslant 2n2^{-j}$ are 
identically distributed, we derive the bound 
\eq{
P'(n,j,t,u)\leqslant  2n2^{-j}\mu\ens{\abs m>n^{1/2}2^{-1-\alpha j}t u\varepsilon_{p+1}},
}
hence 
\begin{multline}
P'(n,j,t,u)\leqslant 
 2n2^{-j}(n^{1/2}2^{-1-\alpha j}t u\varepsilon_{p+1})^{-p}
 \norm{m}_{p,\infty}^p\\ 
 =2^{p+1}\varepsilon_{p+1}^{-p} n^{1-p/2}2^{j(-1+p\alpha)}t^{-p}u^{-p}\norm{m}_{p,\infty}^p.
\end{multline}
Since $\alpha$ and $p$ are linked by the relationship $1/2-1/p=\alpha$, we have 
$p\alpha=p/2-1$ hence 
\begin{equation}\label{bound_P'(n,j,t,u)}
 \int_0^1P'(n,j,t,u)u^p\mathrm{d}u\leqslant C_p
 t^{-p}n^{1-p/2}2^{j(p/2-2)}\norm{m}_{p,\infty}^p.
\end{equation}
Notice the following set equalities: 
\begin{multline}
 \ens{\left(\sum_{i=1}^{2n2^{-j}}U^i(\mathbb E[m^2\mid T\mathcal M]
 )\right)^{1/2}>\varepsilon_{p+1} un^{1/2}2^{-1-\alpha j}t}\\
 =\ens{\frac 1{2n2^{-j}}\sum_{i=1}^{2n2^{-j}}U^i(\mathbb E[m^2\mid T\mathcal M]
 )>2^{-3}\varepsilon_{p+1}^2 u^22^{2j/p}t^2}
\end{multline}
and that $n2^{-j}\geqslant 1$ (because $j \leqslant \log n$), hence 
\begin{multline}
 \ens{\left(\sum_{i=1}^{2n2^{-j}}U^i(\mathbb E[m^2\mid T\mathcal M]
 )\right)^{1/2}>\varepsilon_{p+1} un^{1/2}2^{-1-\alpha j}t}\subseteq
\\ \subseteq \bigcup_{N\geqslant 2}\ens{\frac 1{N}\sum_{i=1}^NU^i(\mathbb E[m^2\mid T\mathcal M]
 )>2^{-3}\varepsilon_{p+1}^2 u^22^{2j/p}t^2},
\end{multline}
from which it follows 
\begin{multline}\label{eq_variance_quadra}
 \mu\ens{\left(\sum_{i=1}^{2n2^{-j}}U^i(\mathbb E[m^2\mid T\mathcal M]
 )\right)^{1/2}>\varepsilon_{p+1} un^{1/2}2^{-1-\alpha j}t}\leqslant \\
 \leqslant \mu\ens{\sup_{N\geqslant 2}\frac 1{N}\sum_{i=1}^NU^i(\mathbb E[m^2\mid T\mathcal M]
 )>2^{-3}\varepsilon_{p+1}^2 u^22^{2j/p}t^2}.
\end{multline}
Combining \eqref{bound_P'(n,j,t,u)} and \eqref{eq_variance_quadra}, we obtain 
\begin{multline}
 \max_{0\leqslant k< 2^j}P(n,j,k,t)\leqslant 
  C_pt^{-p}n^{1-p/2}2^{j(p/2-2)}\norm{m}_{p,\infty}^p\\
  +C_p\int_0^1\mu\ens{\sup_{N\geqslant 2}\frac 1{N}\sum_{i=1}^NU^i(\mathbb E[m^2\mid T\mathcal M]
 )>2^{-3}\varepsilon_{p+1}^2 u^22^{2j/p}t^2}u^p\mathrm du,
\end{multline}
hence by \eqref{first_bound_P_{1,1}} and \eqref{second_bound_P_{1,1}},
\begin{multline}
P_{1,1}(n,t)\leqslant C_pt^{-p}\norm{m}_{p,\infty}^p\sum_{j=1}^{\log n}2^j2^{j(p/2-2)}n^{1-p/2}+\\
 +C_p\int_0^1\sum_{j=1}^{\log n}2^j\mu\ens{\sup_{N\geqslant 2}\frac 1{N}\sum_{i=1}^NU^i(\mathbb E[m^2\mid T\mathcal M]
 )>2^{-3}\varepsilon_{p+1}^2 u^22^{2j/p}t^2}u^p\mathrm du.
\end{multline}
From the elementary bounds 
\begin{align}
 \sum_{j=1}^{\log n}2^{j(p/2-1)}n^{1-p/2}&\leqslant 
(1-2^{1-p/2})^{-1} \\
\label{eq:norm_Lp_series}\sum_{j\geqslant 1}2^j\mu\ens{\abs g>2^{2j/p}}&\leqslant 
2\mathbb E\abs{g}^{p/2}, \quad 
\mbox{ for any non-negative function }g,
\end{align}
with 
\begin{equation}
 g:=2^{3}\varepsilon_{p+1}^{-2}u^{-2}
 \sup_{N\geqslant 2}\frac 1{N}\sum_{i=1}^NU^i(\mathbb E[m^2\mid T\mathcal M] ), u\in (0,1)
\end{equation}

 we obtain 
\begin{equation}\label{bound_P_{1,1}}
 P_{1,1}(n,t)\leqslant C_p t^{-p}\norm{m}_{p,\infty}^p+
 C_pt^{-p}\norm{\sup_{N\geqslant 2}\frac 1N\sum_{i=1}^N U^i(\mathbb E[m^2\mid T\mathcal M])
 }_{p/2}^{p/2}.
\end{equation}
As the Koopman operator $U$ is an $\mathbb L^1$-$\mathbb L^\infty$ contraction, 
Theorem~1 of \cite{MR0131517} gives the existence of a constant $A_p$ such that for each $h\in
\mathbb L^{p/2}$, 
\begin{equation}\label{bound_birkhoff_sums}
 \norm{\sup_{N\geqslant 1}\frac 1N\sum_{j=1}^NU^j(h)}_{p/2}\leqslant A_p\norm h_{p/2}.
\end{equation}

Applying \eqref{bound_birkhoff_sums} with $h:=\mathbb E[m^2\mid T\mathcal M]$, 
 we get by \eqref{bound_P_{1,1}} 
\begin{equation}
 P_{1,1}(n,t)\leqslant C_p t^{-p}\norm{m}_{p,\infty}^p+
 C_pt^{-p}\mathbb E\left(\mathbb E[m^2\mid T\mathcal M]\right)^{p/2},
\end{equation}
which establishes \eqref{goal_inequality2}. This concludes the proof 
of Proposition~\ref{generalized_Doob}.
\end{proof}

\begin{proof}[Proof of Theorem~\ref{sufficient_cond_mart}]
 
The convergence of finite dimensional distributions can be proved using 
Theorem~ of \cite{MR1324786}. Its proof works for filtrations of the 
form $(T^{-i}\mathcal M)_{i\geqslant 0}$ where $T\mathcal M \subset 
\mathcal M$ and also in the non-ergodic setting by considering the ergodic 
components.

We deduce tightness in Theorem~\ref{sufficient_cond_mart} from
Proposition~\ref{generalized_Doob} 
by a truncation argument. For a fixed $R$, we define 
\begin{equation}\label{eq:definition_m_R}
 m_R:=m\chi\ens{\abs m\leqslant R}-\mathbb E[m\chi\ens{\abs m\leqslant R}\mid T\mathcal M] 
 \quad \mbox{and}
\end{equation}
\eq{
 m'_R:=m\chi\ens{\abs m> R}-\mathbb E[m\chi\ens{\abs m> R}\mid T\mathcal M].
}
In this way, the sequences $(m_R\circ T^i)_{i\geqslant 0}$ and 
$(m'_R\circ T^i)_{i\geqslant 0}$ are martingale differences sequences and 
$m=m_R+m'_R$. 

Since $\abs{m_R}\leqslant 2R$ and $(m_R\circ T^i)_{i\geqslant 0}$ is a martingale 
difference sequence, the sequence $(n^{-1/2}S_n^{\mathrm{pl}}(m_R))_{n\geqslant 1}$ 
is tight in $\mathcal H^o_{1/2-1/p}[0,1]$. Consequently, for each positive 
$\varepsilon$, the following convergence takes place: 
\eq{
 \lim_{J\to \infty}\limsup_{n\to \infty}\mu
 \ens{\sup_{j\geqslant J}2^{\alpha j}\max_{r\in D_j}\abs{\lambda_r\left(S_n^{\mathrm{pl}}(m_R)\right)}
 >\varepsilon n^{1/2}}=0.
}
Using Proposition~\ref{generalized_Doob}, we derive the following bound, valid
for each $\varepsilon$ and each $R$,
\begin{multline}\label{bound_truncation}
 \lim_{J\to \infty}\limsup_{n\to \infty}\mu
 \ens{\sup_{j\geqslant J}2^{\alpha j}\max_{r\in D_j}\abs{\lambda_r\left(S_n^{\mathrm{pl}}(m)\right)}
 >\varepsilon n^{1/2}}\leqslant \\
 \leqslant C_p\varepsilon^{-p}\left(\sup_{t\geqslant 0}t^p\mu\ens{\abs m\chi\ens{\abs{m}
 >R}>t}+\sup_{t\geqslant 0}t^p\mu\ens{\mathbb E[\abs m\chi\ens{\abs{m}
 >R}\mid T\mathcal M]>t}\right)+\\
 +\varepsilon^{-p}C_p\mathbb E\left(\left(\mathbb E[m^2\chi\ens{\abs m>R}\mid T\mathcal M]\right)^{p/2}\right).
\end{multline}
The first term is $\sup_{t\geqslant R}t^p\mu\ens{\abs m>t}$, which goes to $0$ 
as $R$ goes to infinity. 

The second term can be bounded by 
$\sup_{t\geqslant R}t^p\mu\ens{\mathbb E[\abs m\mid T\mathcal M]>t}$. Indeed, 
if $t \geqslant R$, we use the inclusion 
\begin{equation}
 \ens{\mathbb E[\abs m\chi\ens{\abs{m}
 >R}\mid T\mathcal M]>t} \subset\ens{\mathbb E[\abs m\mid T\mathcal M]>t},
\end{equation}
and if $t<R$, then accounting the fact that the random variable 
$\mathbb E[\abs m\chi\ens{\abs{m}
 >R}\mid T\mathcal M]$ is greater than $R$, we get 
\begin{align}
 \mathbb E[\abs m\chi\ens{\abs{m}
 >R}\mid T\mathcal M]
 &=\mathbb E[\abs m\chi\ens{\abs{m}\nonumber
 >R}\mid T\mathcal M] \chi \ens{\mathbb E[\abs m\mid T\mathcal M]>R}\\
 &\leqslant \mathbb E[\abs m\mid T\mathcal M] \chi \ens{\mathbb E[\abs m\mid T\mathcal M]>R},
\end{align}
from which it follows that 
\begin{equation}
 t^p\mu\ens{\mathbb E[\abs m\chi\ens{\abs{m}
 >R}\mid T\mathcal M]>t}
 \leqslant R^p\mu\ens{\mathbb E[\abs m\mid T\mathcal M]>R}.
\end{equation}

By Lemma~\ref{tail_cond_expect}, the convergence 
\begin{equation} 
 \lim_{R\to  \infty}\sup_{t\geqslant R}t^p\mu\ens{\mathbb E[\abs m\mid T\mathcal M]>t}=0
\end{equation}
takes place. 

The third term of \eqref{bound_truncation} converges to $0$ as $Réé$ 
goes to infinity by monotone convergence.

This concludes the proof of tightness in Theorem~\ref{sufficient_cond_mart}.

\end{proof}

\subsection{Proof of Theorem~\ref{Hannan_cond}}

By \eqref{regularity}, the equality $f=\sum_{i\geqslant 0}P_i(f)$ holds 
almost surely.
For a fixed integer $K$, we define 
$f_K:=\sum_{i=0}^KP_i(f)$. Then $f_K$ satisfies the conditions of Corollary~\ref{martingale_coboundary}. 

Indeed, we have the equalities
\begin{align}
 P_i(f)-P_0(U^if)&= \mathbb E [f\mid T^i\mathcal M]-\mathbb E [U^if\mid \mathcal M]
 -\mathbb E [f\mid T^{i+1}\mathcal M]+\mathbb E [U^if\mid T\mathcal M]\\
 &=(I-U^i)\mathbb E [f\mid T^i\mathcal M]-(I-U^i)\mathbb E [f\mid T^{i+1}\mathcal M]
\end{align}
and the later term can be expressed as a coboundary noticing that $(I-U^i)=
(I-U)\sum_{k=0}^{i-1}U^k$. Since $P_i(f)$ belongs to the $\mathbb L^p$ space, 
we may write $f_K-\sum_{i=0}^KP_0(U^if)$ as $(I-U)g_K$ where $g_K$ belongs to 
the $\mathbb L^p$ space.
 Defining $m_K:=\sum_{i=0}^KP_0(U^i(f))$, the 
sequence $(m_K\circ T^i)_{i\geqslant 0}$ is a martingale difference sequence 
hence for each positive $\varepsilon$,
\begin{equation}\label{tightness_f_K}
 \lim_{J\to \infty}\limsup_{n\to \infty}\mu
 \ens{\sup_{j\geqslant J}2^{\alpha j}\max_{r\in D_j}\abs{\lambda_r\left(S_n^{\mathrm{pl}}(f_K)\right)}
 >\varepsilon n^{1/2}}=0.
\end{equation}
Now, we have to show that the convergence in \eqref{tightness_f_K} holds if 
$f_K$ is replaced by $f-f_K$. To this aim, we use the inclusion 
\begin{multline}
 \ens{\sup_{j\geqslant J}2^{\alpha j}\max_{r\in D_j}\abs{\lambda_r\left(S_n^{\mathrm{pl}}
 (f-f_K)\right)}
 >\varepsilon n^{1/2}}\subseteq \\
\subseteq \ens{\sup_{j\geqslant 1}2^{\alpha j}\max_{r\in D_j}\abs{\lambda_r
\left(S_n^{\mathrm{pl}}(f-f_K)\right)}
 >\varepsilon n^{1/2}},
\end{multline}
hence 
\begin{align}
 \mu\ens{\sup_{j\geqslant J}2^{\alpha j}\max_{r\in D_j}\abs{\lambda_r\left(S_n^{\mathrm{pl}}
 (f-f_K)\right)}
 >\varepsilon n^{1/2}}&\leqslant 
 \varepsilon^{-p}\norm{\norm{\frac 1{\sqrt n}S_n^{\mathrm{pl}}(f-f_K)}_{\mathcal H_{1/2-1/p}^o}}
 _{p,\infty}^p\\
 &=\varepsilon^{-p}\norm{\norm{\frac 1{\sqrt n}S_n^{\mathrm{pl}}\left(
 \sum_{i\geqslant K+1}P_i(f)\right)}_{\mathcal H_{1/2-1/p}^o}}_{p,\infty}^p,
\end{align}
from which it follows that
\begin{multline}\label{estimation_remainder_martingale}
 \mu\ens{\sup_{j\geqslant J}2^{\alpha j}\max_{r\in D_j}\abs{\lambda_r\left(S_n^{\mathrm{pl}}
 (f-f_K)\right)}
 >\varepsilon n^{1/2}}\\
 \leqslant \varepsilon^{-p}\left(\sum_{i\geqslant K+1}\norm{\norm{\frac 1{\sqrt n}
 S_n^{\mathrm{pl}}\left(
 P_i(f)\right)}_{\mathcal H_{1/2-1/p}^o}}_{p,\infty}\right)^p.
 \end{multline}

Notice that for a fixed $i$, the sequence $(U^l(P_i(f)))_{l\geqslant 1}$ is a 
martingale difference sequence 
(with respect to the filtration $(T^{-i-l}\mathcal M)_{l\geqslant 0}$). Therefore, by 
Proposition~\ref{generalized_Doob}, we obtain 
\begin{equation}
 \norm{\norm{\frac 1{\sqrt n}S_n^{\mathrm{pl}}\left(
 P_i(f)\right)}_{\mathcal H_{1/2-1/p}^o}}_{p,\infty}
 \leqslant C_p\norm{P_i(f)}_p.
\end{equation}

Plugging this estimate into \eqref{estimation_remainder_martingale}, we obtain 
that for some constant $C$ depending only on $p$, 
\begin{equation}\label{tightness_f-f_K}
 \mu\ens{\sup_{j\geqslant J}2^{\alpha j}\max_{r\in D_j}\abs{\lambda_r
 \left(S_n^{\mathrm{pl}}(f-f_K)\right)}
 >\varepsilon n^{1/2}}\leqslant C\varepsilon^{-p}
 \left(\sum_{i\geqslant K+1}\norm{P_i(f)}_p\right)^p.
\end{equation}
Combining \eqref{tightness_f_K} and \eqref{tightness_f-f_K}, we obtain for each $K$:
\begin{multline}
 \lim_{J\to\infty}\limsup_{n\to\infty}
\mu\ens{\sup_{j\geqslant J}2^{\alpha j}\max_{r\in D_j}\abs{\lambda_r
\left(S_n^{\mathrm{pl}}(f)\right)}
>n^{1/2}\varepsilon}\leqslant \\
\leqslant C\varepsilon^{-p}
 \left(\sum_{i\geqslant K+1}\norm{P_i(f)}_p\right)^p.
\end{multline}
Since $K$ is arbitrary, we conclude the proof of Theorem~\ref{Hannan_cond} 
thanks to assumption \eqref{Hannan_condition}.

\bigskip

\textbf{Acknowledgements.} 
The author is grateful to the referee for many comments 
which improved the readability of the paper.

The author would like to thank Dalibor Voln\'y for many useful 
discussions which lead to the counter-example in Theorem~\ref{counter_example}, and also 
Alfredas Ra\v{c}kauskas and Charles Suquet for their support. 

\def\polhk\#1{\setbox0=\hbox{\#1}{{\o}oalign{\hidewidth
  \lower1.5ex\hbox{`}\hidewidth\crcr\unhbox0}}}\def\cprime{$'$}
\providecommand{\bysame}{\leavevmode\hbox to3em{\hrulefill}\thinspace}
\providecommand{\MR}{\relax\ifhmode\unskip\space\fi MR }
\providecommand{\MRhref}[2]{%
  \href{http://www.ams.org/mathscinet-getitem?mr=#1}{#2}
}
\providecommand{\href}[2]{#2}

\end{document}